\numberwithin{equation}{section}
\newtheorem{theorem}{Theorem}[section]
\newtheorem{lemma}[theorem]{Lemma}      
\newtheorem{corollary}[theorem]{Corollary}       
\newtheorem{remark}[theorem]{Remark}
\newtheorem{definition}[theorem]{Definition}
\newtheorem{assumption}[theorem]{Assumption}
\title{Computational Methods and Verification Theorem for Portfolio-Consumption Optimization under Exponential O-U Dynamics}
\author{
 Zhaoxiang Zhong \\
  Faculty of Science\\
  National University of Singapore\\
  \texttt{e1349203@u.nus.edu} \\
   \And
Haiming Song \\
  School of Mathematics\\
 Jilin University\\
  \texttt{songhaiming@jlu.edu.cn} \\
}
\begin{document}
\maketitle
\begin{abstract}
In this paper, we focus on the problem of optimal portfolio-consumption policies in a multi-asset financial market, where the $n$ risky assets follow Exponential Ornstein-Uhlenbeck processes, along with one risk-free bond. The investor's preferences are modeled using Constant Relative Risk Aversion utility with state-dependent stochastic discounting. The problem can be formulated as a high-dimensional stochastic optimal control problem, wherein the associated value function satisfies a Hamilton-Jacobi-Bellman (HJB) equation, which constitutes a necessary condition for optimality. We apply a variable separation technique to transform the HJB equation to a system of ordinary differential equations (ODEs). Then a class of hybrid numerical approaches that integrate exponential Rosenbrock-type methods with Runge-Kutta methods is proposed to solve the ODE system. More importantly, we establish a rigorous verification theorem that provides sufficient conditions for the existence of value function and admissible optimal control, which can be verified numerically. A series of experiments are performed, demonstrating that our proposed method outperforms the conventional grid-based method in both accuracy and computational cost. Furthermore, the numerically derived optimal policy achieves superior performance over all other considered admissible policies.
\end{abstract}


\section{Introduction}
		\par Optimal portfolio-consumption problem is one of the most important topics in mathematical finance. Merton \cite{MERTON1971373} proposes the fundamental stochastic control model to address this problem, where the stock price follows geometric Brownian motion. His work has attracted significant attention, leading to the development of numerous models and approaches aimed at adapting to the stochastic nature of real-world markets. In particular, various mean-reverting models are introduced to the problem, driven by the discovery of mean-reverting behavior in asset prices \cite{CAMPBELL1987373,cc65fdf1-38c7-353c-aa33-988823769d7e,POTERBA198827}, such as Ornstein-Uhlenbeck (O-U) model \cite{62dfb375-f276-343b-afce-284aaa9740e1}, exponential O-U model \cite{Benth01072005} and Cox-Ingersoll-Ross model \cite{doi:10.1142/S0219024909005452}. In this paper, we will consider exponential O-U model, which effectively describes and simulates the general mean-reverting dynamics over time of certain asset prices.
		\par A variety of utility functions have been integrated into the portfolio-consumption optimization framework, among which the Constant Absolute Risk Aversion (CARA) utility \cite{Ma03072019,https://doi.org/10.1155/2014/153793} and the Constant Relative Risk Aversion (CRRA) utility \cite{KASSIMATIS2021101932,miao2023optimalinvestmentconsumptionstrategies} are the most prevalent ones. The CARA framework has been widely studied for its analytical tractability; however, it assumes constant absolute risk aversion, which implies a fixed allocation to risky assets regardless of wealth—an assumption that contradicts empirical evidence showing wealth-dependent risk preferences. In contrast, CRRA utility provides a more realistic framework, as it captures how investment behavior varies with changes in wealth \cite{https://doi.org/10.1002/hec.1331}. Moreover, recent studies have emphasized CRRA utility formulations featuring stochastic discounting, including non-exponential discounting \cite{ekeland2008investment}, state-dependent discounting \cite{doi:10.1142/S0219024909005452}, and regime-switching discounting \cite{PIRVU2014142}. The state-dependent discounting setting, in which the discount rate explicitly depends on the underlying stock price dynamics, allows the model to better capture the changing risk preferences of investors in financial markets. Overall, a model that incorporates exponential O–U dynamics and CRRA utility with state-dependent stochastic discounting provides a more realistic and flexible framework for addressing real-world financial optimization problems.
		\par In terms of solutions, researchers have proposed a wide range of techniques to analyze optimal portfolio-consumption problems. Early contribution by Karatzas \cite{doi:10.1137/0325086} utilizes martingale methods to derive weak solutions. With the introduction of Hamilton-Jacobi-Bellman (HJB) equations, many studies have reformulated the problem into a HJB-kind nonlinear partial differential equation (PDE) and derived closed-form solutions in $1$-dimensional case \cite{LEACH2007368,pham2002smooth,zariphopoulou1999optimal}. To tackle more general cases, grid-based numerical schemes have been introduced, including Markov chain approximation combined with logarithmic transformation \cite{5991218}, fixed-point iterative methods \cite{Berdjane2013Optimal}, and hybrid Monte Carlo with finite difference approaches \cite{TSAI2018170}. However, multi-asset problems involving CRRA utility functions give rise to nonlinear high-dimensional HJB equations that are typically intractable for closed-form solutions. Moreover, conventional grid-based numerical methods suffer from the curse of dimensionality when applied to such problems. To address this computational challenge, various numerical methods have been proposed, such as sparse grid methods \cite{doi:10.1137/110834950}, Hopf formulas \cite{darbon2016algorithmsovercomingcursedimensionality}, and deep learning methods \cite{doi:10.1137/23M155832X}. These works provide a good approximation of the high-dimensional value function, however, to the best of our knowledge, their numerical implementation can often be complex and lack rigorous verification theorems to ensure optimality. 
		\par In this paper, we also adopt the framework of deriving the HJB equation for the optimal control problem. Then we apply a variable separation technique to transform the high-dimensional HJB equation into an ODE system, significantly reducing computational complexity. To solve the resulting ODEs numerically, we develop hybrid numerical approaches that integrate exponential Rosenbrock-type methods \cite{doi:10.1137/080717717,LI2021113360} with Runge-Kutta methods, which demonstrate superior accuracy and efficiency. Most importantly, we derive a rigorous verification theorem to validate the optimality of our approach, which is an aspect often overlooked in previous numerical methods.
		\par The remainder of this paper is organized as follows: In Section \ref{sec2}, we start with introducing the corresponding modeling framework of our paper. In Section \ref{sec3}, the HJB equation corresponding to the stochastic optimal control problem is derived, along with the form of the optimal policy. Then a variable separation technique is applied to transform the HJB equation into an ODE system. In Section \ref{sec4}, we introduce the numerical methods to solve the ODE system and present a convergence theorem. A rigorous verification theorem is derived in Section \ref{sec5}. In Section \ref{sec6}, we conduct a series of numerical experiments to evaluate the performance of our algorithms. Finally, we summarize our findings and present conclusions in Section \ref{sec7}.

\section{Problem formulation}
		\label{sec2}
        In this section, we introduce the modeling framework and formulate the stochastic optimal control problem including the constraint equations and the optimization objective. 
		\subsection{Constraint Equations}
		In our paper, we extend the assumptions of the financial market model in \cite{Benth01072005} to the $n$-dimensional setting.
		\begin{assumption}
			The financial market under consideration is assumed to be arbitrage-free over a fixed time horizon $[0,T]$, where $0\leq T<\infty$. Let $(\Omega,\mathcal{F},\mathbb{P},\left\{\mathcal{F}_{t}\right\}_{0\leq t \leq T})$ be a complete filtered probability space satisfying usual conditions, and $\left\{B_{t}=\left(B_{t}^{(i)}\right)_{1\leq i \leq n}^{\top}, 0\leq t \leq T\right\}$ denote the $n$-dimensional Brownian motion defined on it. The market consists of $n+1$ tradable assets: one risk-free bond and $n$ risky stocks. Within this framework, a self-financed small investor who allocates wealth across these assets is considered.
            \label{a2.1}
		\end{assumption}
		\begin{assumption}
			The price $P_{0}(t)$ of bond is governed by following ODE:
			\begin{equation}
				\left\{
				\begin{aligned}
					\centering
					\mathrm{d}P_{0}(t)&=rP_{0}(t)\,\mathrm{d}t,\quad 0< t\leq T,\\
					P_{0}(0) &=p_{0},
				\end{aligned} \right.
				\notag
			\end{equation}
			where $p_{0}>0$ is the initial price, $r>0$ is the constant interest rate. The price $P_{i}(t)$ of $i$th ($1\leq i\leq n$) stock follows exponential O-U process
			\begin{equation}
				\left\{
				\begin{aligned}
					\centering
					\mathrm{d}P_{i}(t)&=\alpha_{i}(\mu_{i}-\ln P_{i}(t))P_{i}(t)\,\mathrm{d}t+\sigma_{i}P_{i}(t)\, \mathrm{d}B_{t},\quad 0< t\leq T,\\
					P_{i}(0) &=p_{i},
				\end{aligned} \right.
				\notag
			\end{equation}
			where $p_{i}>0$ is the initial price, $\alpha_{i},\mu_{i}>0$ are constants, $\sigma=(\sigma_{ij})_{1\leq i,j\leq n}$ is the non-degenerate volatility matrix whose $i$th row is vector $\sigma_{i}$ and entries are all positive.
            \label{a2.2}
		\end{assumption}
		\begin{assumption}
			A portfolio-consumption pair $(\pi(t), C(t))$ is defined, where $\pi(t) = (\pi_{i}(t))_{1 \leq i \leq n}^{\top}$ denotes the allocation of wealth across the $n$ stocks, with $\pi_{i}(t)$ representing the amount invested in the $i$th stock at time $t$. $C(t)$ denotes the consumption rate at time $t$. In addition, $\pi_0(t)$ represents the amount of wealth invested in the bond at time $t$. Assume $\pi(t)$ and $C(t)$ are measurable and adapted to the filtration $\left\{\mathcal{F}_{t}\right\}$, with $C(t) \in [0,+\infty),\pi_{i}(t) \in \mathbb{R}$, for all $t\in [0,T]$. Note that for $i = 0, 1, \ldots, n$, $\pi_i(t)$ may take negative values, indicating short-selling of the corresponding bond or stock. 
            \label{a2.4}
		\end{assumption} 
		\begin{remark}
			For analytical convenience, we let $S(t)=(S_{i}(t))_{1\leq i\leq n}^{\top}$, where $S_{i}(t)=\ln P_{i}(t)$. By Ito equation \cite{doi:10.1137/0325086}, we have 
        			\begin{equation}
				\left\{
				\begin{aligned}
					\centering
					\mathrm{d}S(t)&=\operatorname{diag}(\alpha)(w-S(t))\,\mathrm{d}t+\sigma \, \mathrm{d}B_{t},\quad 0< t\leq T,\\
					S(0) &=S_0,
				\end{aligned} \right.
				\label{2.0}
			\end{equation}
			where 
            \begin{equation*}
                \operatorname{diag}(\alpha)=
		\begin{pmatrix}
			\alpha_{1}& \cdots &   0  \\
			\vdots          & \ddots & \vdots \\
			0          & \cdots & \alpha_{n}
             \end{pmatrix},
            \hspace{2em}w=\left(\mu_{i}+\frac{\|\sigma_{i}\|^{2}}{2\alpha_{i}}\right)_{1\leq i \leq n}^{\top}.
            \end{equation*}
		\end{remark}
		
         Let $X(t)$ denotes the wealth at time $t\in [0,T]$. The amount of wealth allocated to the bond is then given by $\pi_{0}(t)\equiv X(t)-\sum_{i=1}^{n}\pi_{i}(t)$. Building on equation (\ref{2.0}) and Ito equation, we can readily derive that $X(t)$ has following dynamics:
        \begin{equation}
				\left\{
				\begin{aligned}
					\centering					\mathrm{d}X(t)&=rX(t)\,\mathrm{d}t+\pi^{\top}(t)(\operatorname{diag}(\alpha)(\mu-S(t))-re)\,\mathrm{d}t-C(t)\,\mathrm{d}t+\pi^{\top}(t)\sigma \, \mathrm{d}B_{t},\quad 0< t\leq T,\\
					X(0) &=X_0,
				\end{aligned} \right.
				\label{2.1}
			\end{equation}
		where $e$ is $n$-dimensional column vector of ones and $\mu=(\mu_{i})_{1\leq i \leq n}^{\top}$.
		
\subsection{Optimization Objective}
		To facilitate, define portfolio process $\pi\triangleq\{\pi(t),0\leq t\leq T\}$, consumption process $C\triangleq\{C(t),0\leq t\leq T\}$ and corresponding wealth process $X\triangleq\{X(t),0\leq t\leq T\}$. An optimization objective functional is stated as
		\begin{equation}		J(\pi,C)\triangleq\mathbb{E}\left[\int_{0}^{T}U(t,C(t))\,\mathrm{d}t+U(T,X(T))\right].
            \label{2.100}
		\end{equation}
		 The utility function $U(t,c)$ represents the CRRA utility  with a stochastic discount factor that depends on the stock price. Specifically,
		  \begin{equation*}
		      U(t,c)\triangleq\gamma^{-1}\psi(t)^{1-\gamma}c^{\gamma}, \quad \gamma\in(0,1),
		  \end{equation*}
		where 
		\begin{equation}
			\psi(t)\triangleq\exp\left\{-\frac{1}{1-\gamma}\int_{0}^{t}[\rho_{0}+S(u)^{\top}\rho+S(u)^{\top}\varrho S(u)] \,\mathrm{d}u\right\}, ~ \rho_{0}\in \mathbb{R}, ~ \rho\in \mathbb{R}^{n}, ~ \varrho\in \mathbb{S}^{n}.
            \label{psi}
		\end{equation}
		  Here, $\mathbb{S}^{n}$ denotes the set of all symmetric $n\times n$ matrices. Next we introduce the admissible set.
		\begin{definition}
			 A portfolio-consumption pair process $(\pi,C)$ is said to be admissible for any initial wealth $X_0$, if and only if $(\pi,C)$ satisfy following conditions:
			\begin{enumerate}
				\item[(a)]
				$\mathbb{E}\left[\displaystyle\int_{0}^{T} C(t)\,\mathrm{d}t\right] < +\infty ,$
				\item[(b)]$\mathbb{E}\left[\displaystyle\int_{0}^{T}\|\pi(t)\|^{2}\,\mathrm{d}t\right]<+\infty  ,$
				\item[(c)]				$\mathbb{E}\left[|J(\pi,C)|\right]<+\infty$,
				\item[(d)]
				$X(t)\geq 0, \quad \forall t\in[0,T]$.
			\end{enumerate}
			Here, $\| \cdot \|$ denotes the Frobenius norm, and we will adopt this notation throughout. The class of all such pairs is denoted by $\mathcal{A}_{X_0}$.
            \label{d2.5}
		\end{definition}
        \begin{remark}
			Under the conditions (a)-(b) in Definition \ref{d2.5}, it guarantees that SDE (\ref{2.1}) has unique strong solution
				\begin{align}
					X(t)=\exp\{rt\}&\left\{X_0+\int_{0}^{T}\exp\left\{-ru\right\}\left[\pi(u)^{\top}(\operatorname{diag}(\alpha)(\mu-S(u)-re)-C(u)\right]\,\mathrm{d}u  \right. \notag \\
					&\left.\quad +\int_{0}^{T}\exp\left\{-ru\right\}\pi(u)^{\top}\sigma \mathrm{d}B_{u}\right\}.
                    \label{ss} 
				\end{align}
		\end{remark}
		Our aim is to find the optimal portfolio-consumption pair process $(\pi^{*},C^{*})$ within $\mathcal{A}_{X_0}$ that maximizes (\ref{2.100}) subject to the constraint (\ref{2.1}).  

        \section{HJB Equation and Corresponding ODE System}
        \label{sec3}
        In this section, we derive the HJB equation corresponding to stochastic optimal control problem (\ref{2.1})-(\ref{2.100}). We also present the optimal policy, which can be expressed in a feedback form through the value function. Moreover, a variable separation technique \cite{Benth01072005,doi:10.1142/S0219024909005452} including power transformation and Feynman-Kac representation formula \cite{Karatzas1998Brownian} is applied to transform the HJB equation to an ODE system.
        \subsection{HJB Equation and Optimal Policy}
        To solve the optimal control problem in a continuous sense, value function is defined as the maximum utility attainable starting from time $t$, wealth $X(t) \equiv x$, logarithm stock price $S(t) \equiv S$ and discount factor $\psi(t) \equiv \psi $. Specifically,
		\begin{equation}			V(t,x,S,\psi)\triangleq\mathop{\sup}\limits_{(\pi,C)\in\mathcal{A}_{x}}\mathbb{E}^{(t,x,S,\psi)}\left[\int_{t}^{T}U(u,C(u))\,\mathrm{d}u+U(T,X(T))\right].
        \label{o3}
		\end{equation} 
        Here $\mathbb{E}^{(t,x,S,\psi)}$ denotes the expectation from initial state $(t,x,S,\psi)$. Suppose that $V(t, x, S, \psi) \in C^{1,2}([0,T] \times \mathbb{R} \times \mathbb{R}^n \times \mathbb{R}_+)$, where $\mathbb{R}_+$ denotes the set of all positive real numbers and $ C^{1,2}$ denotes the class of functions that are once continuously differentiable in time and twice continuously differentiable in all state variables. Then $V$ is a classical solution to the HJB equation (see \cite[Proposition 3.5 of Chapter 4]{yong1999stochastic}), which, in the context of our problem, is given as
           \begin{equation}
               \frac{\partial v}{\partial t}
+ r x\,\frac{\partial v}{\partial x}
+ \beta \psi\,\frac{\partial v}{\partial \psi}
+ \mathcal{L}_{S}v    
+ \mathop{\sup}\limits_{(\pi,C)\in\mathcal{A}_{x}} \left\{\mathcal{G}_{xS \psi}\,v\!\left(\pi,C\right)\right\}
=0, \label{2.5}                    
           \end{equation}
           where
			\begin{align*}
				\mathcal{L}_{S}v=&(w^{\top}-S^{\top})\operatorname{diag}(\alpha)D_{S}v+\frac{1}{2}\mathrm{tr}\left\{D_{SS}^{2}v\sigma\sigma^{\top}\right\}, \\
        \mathcal{G}_{xS \psi}\,v\!\left(\pi,C\right)=&\gamma^{-1}\psi^{1-\gamma}C^{\gamma}+\frac{\partial v}{\partial x}\pi^{\top}\left[\operatorname{diag}(\alpha)(\mu-S)-re\right]\\
        &-\frac{\partial v}{\partial x}C
			+\frac{1}{2}\frac{\partial^{2} v}{\partial x^{2}}\pi^{\top}\sigma\sigma^{\top}\pi+\pi^{\top}\sigma\sigma^{\top}D_{xS}^{2}v,
			\end{align*}
            and 
            \begin{equation*}
                D_{S}v=(\frac{\partial v}{\partial S_{i}})_{1\leq i \leq n}^{\top},\quad
		D_{SS}^{2}v=(\frac{\partial^{2} v}{\partial S_{i}\partial S_{j}})_{1\leq i,j \leq n},\quad\beta=-\frac{1}{1-\gamma}[\rho_{0}+S^{\top}\rho+S^{\top}\varrho S],
            \end{equation*}
		with terminal condition
		\begin{equation}
			v(T,x,S,\psi)=U(T,x).
			\label{2.51}
		\end{equation}
		
		\par We denote the solution to the HJB equation by $v$ to distinguish it from the value function $V$, as the solution may represent a broader class of functions that includes $V$. HJB equation (\ref{2.5})-(\ref{2.51}) constitutes as a necessary condition for value function. A rigorous verification theorem, which provides sufficient conditions under which a candidate function $v$ coincides with the value function $V$, will be derived in Section \ref{sec4}. When both the necessary and sufficient conditions are fulfilled, the candidate function $v$ is indeed the value function $V$.
        \par As demonstrated in Chapter 4 of \cite{yong1999stochastic}, if $V\in C^{1,2}([0,T]\times\mathbb{R}\times \mathbb{R}^{n} \times\mathbb{R})$ and the sufficient conditions for optimality are satisfied, solving for the supremum yields optimal policy $(\pi^{*},C^{*})$. Therefore, we now focus on solving PDE (\ref{2.5})-(\ref{2.51}) to obtain a candidate solution. To reduce the complexity of the HJB equation, we consider applying a variable separation technique. Following the idea in \cite{zariphopoulou1999optimal}, equation (\ref{2.51}) motivates the following power transformation ansatz:
		\begin{equation}
			v(t,x,S,\psi)=\gamma^{-1}\psi^{1-\gamma}x^{\gamma}\varphi^{1-\gamma}(t,S),
			\label{2.8}
		\end{equation}
		where $\varphi(t,S)\in C^{1,2}([0,T]\times \mathbb{R}^{n})$.
		By substituting formula (\ref{2.8}) into equation (\ref{2.5})-(\ref{2.51}) and solving the supremum in (\ref{2.5}), we obtain the PDE satisfied by $\varphi(t,S)$ as
			\begin{align}
				\frac{\partial \varphi}{\partial t}&+(w^{\top}-S^{\top})\operatorname{diag}(\alpha)D_{S}\varphi+\frac{\gamma}{1-\gamma}\left[(\mu^{\top}-S^{\top})\operatorname{diag}(\alpha)-re^{\top}\right]D_{S}\varphi \notag   \\
				&+\left\{\frac{r\gamma}{1-\gamma}+\beta+\frac{\gamma\left[(\mu^{\top}-S^{\top})\operatorname{diag}(\alpha)-re^{\top}\right](\sigma\sigma^{\top})^{-1}\left[\operatorname{diag}(\alpha)(\mu-S)-re\right]}{2(1-\gamma)^{2}}\right\}\varphi  \notag \\
                &+\frac{1}{2}\mathrm{tr}\left\{D_{SS}^{2}\varphi\sigma\sigma^{\top}\right\}+1=0,
                \label{2.9}
			\end{align}
		with terminal condition
		\begin{equation}
			\varphi(T,S)=1.
			\label{2.10}
		\end{equation}
        In the next step, we only have to solve PDE (\ref{2.9})-(\ref{2.10}).  
        \begin{remark}
           Following the approach of \cite{Benth01072005}, once PDE (\ref{2.9})-(\ref{2.10}) has been solved and the sufficient conditions for optimality are satisfied, one can derive the optimal policy in feedback form:  
            \begin{align}
         \pi^*(t, X^*(t), S(t)) &= \frac{1}{1 - \gamma} X^*(t)(\sigma \sigma^\top)^{-1} \left[ \operatorname{diag}(\alpha)(\mu - S(t)) - re \right] \notag \\
         & \hspace{2em}+ X^*(t) D_S \varphi(t, S(t)) \varphi^{-1}(t, S(t)), 
         \label{3.221}  \\
          C^*(t, X^*(t), S(t)) &= \varphi^{-1}(t, S(t)) X^*(t). \label{3.222}
            \end{align}
         \label{rr3.1}
        \end{remark}
         \subsection{ODE System}
         \label{sec3.2}
        To derive an explicit form for $\varphi$, we adopt the change of measure approach introduced in \cite{Benth01072005}. Reviewing (\ref{2.1}), if Novikov condition
		\begin{equation}
			\mathbb{E}\left[\exp\left\{\frac{1}{2}\int_{0}^{T}\frac{\gamma^{2}}{(1-\gamma)^{2}}\|\sigma^{-1}(\operatorname{diag}(\alpha)(\mu-S(t))-re)\|^{2}\,\mathrm{d}t\right\}\right]<+\infty
			\label{2.11}
		\end{equation}
        is satisfied, by Gisanov Theorem \cite{Karatzas1998Brownian}, we can define a new probability measure $\bar{\mathbb{P}}$ under which the process $S(t)$ satisfies
		\begin{equation}
			\mathrm{d}S(t)=\operatorname{diag}(\alpha)(w-S(t))\,\mathrm{d}t+\frac{\gamma}{1-\gamma}\left[\operatorname{diag}(\alpha)(\mu-S(t))-re\right]\,\mathrm{d}t+\sigma \mathrm{d}\bar{B}_{t}.
			\label{Stfk}
		\end{equation}
		We note that condition (\ref{2.11}) is essential as it will be incorporated into the verification theorem in Section \ref{sec4}. Assume condition (\ref{2.11}) holds,
		we define
		\begin{equation*}
			H(S)=\frac{r\gamma}{1-\gamma}+\beta+\frac{\gamma\left[(\mu^{\top}-S^{\top})\operatorname{diag}(\alpha)-re^{\top}\right](\sigma\sigma^{\top})^{-1}\left[\operatorname{diag}(\alpha)(\mu-S)-re\right]}{2(1-\gamma)^{2}}.
		\end{equation*}
		Based on equation (\ref{Stfk}), by Feynman-Kac representation Formula, we obtain expectation representation of $\varphi$ as
		\begin{align}
			\varphi(t,S)&= \bar{\mathbb{E}}^{(t,x,S,\psi)}\left[\exp\left\{\int_{t}^{T}H(S(\tau))\,\mathrm{d}\tau\right\}+\int_{t}^{T}\exp\left\{\int_{t}^{u}H(S(\tau))\,\mathrm{d}\tau\right\}\,\mathrm{d}u \right] \notag \\
            &\triangleq \varphi_{1}(t,S)+\varphi_{2}(t,S). \label{phijia}
		\end{align}
		Subsequently, we only have to solve $\varphi_1,\varphi_2$ to obtain $\varphi$.
	    \begin{lemma}
	    	$\varphi_{2}(t,S)$ can be written as 
	    	\begin{equation}
	    		\varphi_{2}(t,S)=\int_{t}^{T}\varphi_{1}(u,S)\,\mathrm{d}u.   
                \label{3phi}
	    	\end{equation}
	    \end{lemma}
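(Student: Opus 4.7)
The plan is to exploit two features of the representation \eqref{phijia}: the inner integral can be pulled out of $\bar{\mathbb{E}}$ by Fubini, and the process $S$ under $\bar{\mathbb{P}}$ governed by \eqref{Stfk} is time-homogeneous since its drift and diffusion coefficients depend only on $S$.

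First, I would verify integrability so that Fubini--Tonelli applies: writing $H$ as an affine-plus-quadratic function of $S$, and noting that under $\bar{\mathbb{P}}$ the process $S$ is a (shifted) Ornstein--Uhlenbeck type process with Gaussian marginals, $\bar{\mathbb{E}}\bigl[\exp\{\int_{t}^{u} H(S(\tau))\,d\tau\}\bigr]$ is finite for every $u\in[t,T]$ and locally bounded in $u$; this lets me swap the $du$ integral with the expectation and write
\begin{equation*}
\varphi_{2}(t,S)=\int_{t}^{T}\bar{\mathbb{E}}^{(t,x,S,\psi)}\!\left[\exp\left\{\int_{t}^{u}H(S(\tau))\,d\tau\right\}\right]du.
\end{equation*}

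Next, set
\begin{equation*}
g(s,S)\triangleq\bar{\mathbb{E}}\!\left[\exp\left\{\int_{0}^{s}H(\widetilde S(\tau))\,d\tau\right\}\;\middle|\;\widetilde S(0)=S\right],
\end{equation*}
where $\widetilde S$ solves the same SDE \eqref{Stfk}. Because the coefficients of \eqref{Stfk} are autonomous, the law of $\{S(t+h)\}_{h\ge 0}$ under $\bar{\mathbb{P}}^{(t,\cdot,S,\cdot)}$ coincides with the law of $\{\widetilde S(h)\}_{h\ge 0}$. Consequently,
\begin{equation*}
\bar{\mathbb{E}}^{(t,x,S,\psi)}\!\left[\exp\left\{\int_{t}^{u}H(S(\tau))\,d\tau\right\}\right]=g(u-t,S),\qquad \varphi_{1}(t,S)=g(T-t,S).
\end{equation*}

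Finally I substitute and change variables. From the Fubini step,
\begin{equation*}
\varphi_{2}(t,S)=\int_{t}^{T}g(u-t,S)\,du,
\end{equation*}
and from the expression for $\varphi_{1}$,
\begin{equation*}
\int_{t}^{T}\varphi_{1}(u,S)\,du=\int_{t}^{T}g(T-u,S)\,du.
\end{equation*}
Setting $v=u-t$ in the first and $v=T-u$ in the second, both integrals equal $\int_{0}^{T-t}g(v,S)\,dv$, which yields \eqref{3phi}.

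The main obstacle is the rigorous justification of the time-homogeneity step and the accompanying integrability condition for Fubini; once these are in place the rest is just a change of variable. In particular, I need to make sure the exponential moments of $\int_{t}^{u} H(S(\tau))\,d\tau$ are finite uniformly in $u\in[t,T]$, which follows from the Gaussian structure of the O--U-type process under $\bar{\mathbb{P}}$ and the at-most quadratic growth of $H$; this could be stated as a standing hypothesis parallel to the Novikov condition \eqref{2.11}.
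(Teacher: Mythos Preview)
Your proof is correct and follows essentially the same approach as the paper: both arguments apply Fubini to pull the $du$-integral outside the expectation, then exploit the time-homogeneity of \eqref{Stfk} to express everything via a single function of the elapsed time (your $g(s,S)$ is exactly the paper's $\phi(\tau-t,S)$), and conclude by the same change of variables. If anything, your version is slightly more careful in flagging the Fubini integrability hypothesis, which the paper leaves implicit.
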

	    \begin{proof}
	    	First, we define $\tilde{\varphi_{1}}(t,\tau,S)$ as the function $\varphi_{1}(t,S)$ with the fixed terminal time $T$ replaced by a variable $\tau \in (t,T]$. We can derive that 
	    	\begin{align}
	    		\varphi_{2}(t,S)&=\bar{\mathbb{E}}^{(t,x,S,\psi)}\left[\int_{t}^{T}\exp\left\{\int_{t}^{\tau}H(S(u))\,\mathrm{d}u\right\}\,\mathrm{d}\tau\right] \notag \\    		&=\int_{t}^{T}\bar{\mathbb{E}}^{(t,x,S,\psi)}\left[\exp\left\{\int_{t}^{\tau}H(S(u))\,\mathrm{d}u\right\}\right]\,\mathrm{d}\tau \notag \\
	    		&=\int_{t}^{T}\tilde{\varphi_{1}}(t,\tau,S)\,\mathrm{d}\tau.
	    		\label{2.14}
	    	\end{align}
	    Since both the drift and diffusion terms in SDE (\ref{2.0}), as well as the function $H$, are independent of time $t$, the system $\tilde{\varphi}_1$ is time-consistent.
          Hence, there exists a function $\phi(\tau -t ,S)$ satisfying
	    \begin{equation*}
	    	\phi(\tau -t,S)=\tilde{\varphi_{1}}(t,\tau,S).
	    \end{equation*}
	     By equation (\ref{2.14}) and setting $T-u=\tau-t$, we have
	    \begin{equation*}
	    	\varphi_{2}(t,S)
	    	=\int_{t}^{T}\phi(\tau -t,S)\,\mathrm{d}\tau = \int_{t}^{T}\phi(T -u,S)\,\mathrm{d}u=\int_{t}^{T}\tilde{\varphi_{1}}(u,T,S)\,\mathrm{d}u=\int_{t}^{T}\varphi_1(u,S)\,\mathrm{d}u.
	    \end{equation*}
	    \end{proof}
		\par Now we only have to solve $\varphi_{1}(t,S)$.
        By Feynman-Kac Formula,  the conditional expectation representation of $\varphi_{1}$ can be equivalently formulated as the following PDE:
		\begin{align}
			\frac{\partial \varphi_{1}}{\partial t} 
			&+ (w^{\top} - S^{\top}) \operatorname{diag}(\alpha) D_{S} \varphi_{1} 
			+ \frac{\gamma}{1-\gamma} 
			\bigl[ (\mu^{\top} - S^{\top}) \operatorname{diag}(\alpha) - r e^{\top} \bigr] D_{S} \varphi_{1} \notag  \\
			&+ \biggl\{ \frac{\gamma \bigl[ (\mu^{\top} - S^{\top}) \operatorname{diag}(\alpha) - r e^{\top} \bigr] 
				(\sigma \sigma^{\top})^{-1} \bigl[ \operatorname{diag}(\alpha) (\mu - S) - r e \bigr]}{2(1-\gamma)^{2}} \notag\\
                & +\frac{1}{1-\gamma} 
			( r \gamma - \rho_{0} - S^{\top} \rho - S^{\top} \varrho S )
			\biggr\} \varphi_{1}+ \frac{1}{2} \operatorname{tr} \biggl\{ D_{SS}^{2} \varphi_{1} \sigma \sigma^{\top} \biggr\} = 0,
			\label{2.17}
		\end{align}
        with terminal condition
		\begin{equation*}
			\varphi_1(T,S)=1.
		\end{equation*}
        Note that $H(S)$ is a quadratic function of $S$, we assume a solution ansatz for $\varphi_{1}(t,S)$ of this form \cite{Benth01072005}: 
		\begin{equation}
			\varphi_{1}(t,S)=\exp\left\{S^{\top}g(t)S+S^{\top}f(t)+f_{0}(t)\right\}.
			\label{2.15}
		\end{equation}
		Here 
        \begin{equation*}
            g(t)=(g_{ij}(t))_{n\times n}:[0,T]\rightarrow \mathbb{R}^{n\times n},\quad f(t)=(f_{i}(t))_{1 \leq i \leq n}^{\top}:[0,T]\rightarrow \mathbb{R}^{n},\quad f_{0}(t):[0,T]\rightarrow \mathbb{R}.
        \end{equation*}
        We can always rewrite $g(t)$ as $g_{s}(t)+g_{ns}(t)$, where
		\begin{equation*}
			g_{s}(t)=\frac{g(t)+g(t)^{\top}}{2}, \hspace{2em}g_{ns}(t)=\frac{g(t)-g(t)^{\top}}{2}.
		\end{equation*}
		Since $S^{\top}g_{ns}(t)S=0$, the non-symmetric part contributes nothing to quadratic form $S^{\top}g(t)S$. Hence it's quite natural to assume that $g(t)$ is symmetric. Combining equation (\ref{2.17}) and (\ref{2.15}), and with variable separation, we have
		\begin{subequations}
			\begin{align}
			g'(t)=&\frac{1}{1-\gamma}\operatorname{diag}(\alpha)g(t)+\frac{1}{1-\gamma}g(t)\operatorname{diag}(\alpha)-2g(t)\sigma\sigma^{\top}g(t) \notag \\
             & -\frac{\gamma \operatorname{diag}(\alpha)(\sigma\sigma^{\top})^{-1}\operatorname{diag}(\alpha)}{2(1-\gamma)^{2}} + \frac{1}{1-\gamma}\varrho, 
                \label{2.18a}\\
                f'(t) = &\frac{1}{1-\gamma} \operatorname{diag}(\alpha) f(t) - 2 g(t) \sigma \sigma^{\top} f(t) 
				- 2 g(t) \operatorname{diag}(\alpha) w  - \frac{2\gamma}{1-\gamma} g(t) 
				\bigl[ \operatorname{diag}(\alpha) \mu - r e \bigr] \notag\\
				&+ \frac{\gamma}{(1-\gamma)^{2}} \operatorname{diag}(\alpha) 
				(\sigma \sigma^{\top})^{-1} 
				\bigl[ \operatorname{diag}(\alpha) \mu - r e \bigr]+ \frac{1}{1-\gamma} \rho , \label{2.18b}\\
				f_{0}'(t)=&-w^{\top}\operatorname{diag}(\alpha)f(t)-\frac{\gamma}{1-\gamma}\left[\mu^{\top}\operatorname{diag}(\alpha)-re^{\top}\right]f(t)-\frac{1}{2}f^{\top}(t)\sigma\sigma^{\top}f(t) -\mathrm{tr}\left\{g(t)\sigma\sigma^{\top}\right\} \notag \\
               &-\frac{\gamma\left[\mu^{\top}\operatorname{diag}(\alpha)-re^{\top}\right](\sigma\sigma^{\top})^{-1}\left[\operatorname{diag}(\alpha)\mu-re\right]}{2(1-\gamma)^{2}}-\frac{r\gamma-\rho_{0}}{1-\gamma},
                \label{2.18c}
			\end{align}
		\end{subequations}
        with terminal conditions
        \begin{equation*}
			g(T)=0_{n\times n},\hspace{2em}f(T)=0_{n\times 1},\hspace{2em}f_{0}(T)=0.
		\end{equation*}
		This reduces the problem to solving the ODE system (\ref{2.18a})-(\ref{2.18c}).  
		\begin{lemma} \label{l3.3}
			Under Assumptions \ref{a2.1}, \ref{a2.2} and \ref{a2.4}, $g(t),f(t),f_{0}(t)$ are uniformly bounded in Frobenius norm on $[0,T]$.
		\end{lemma}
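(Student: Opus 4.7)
The ODE system (\ref{2.18a})--(\ref{2.18c}) is triangular: (\ref{2.18a}) is an autonomous matrix Riccati equation, (\ref{2.18b}) is linear in $f$ with coefficients depending on $g$, and (\ref{2.18c}) determines $f_0$ by integrating an expression built from $g$ and $f$. My plan is therefore hierarchical: first bound $g$, then $f$, then $f_0$. The essential difficulty is entirely in the first step; the remaining two are routine Gronwall and integration arguments.

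For $g$, equation (\ref{2.18a}) has the form
\[
g'(t) = A g(t) + g(t) A - g(t) B g(t) - C,
\]
with $A = (1-\gamma)^{-1}\operatorname{diag}(\alpha)$, $B = 2\sigma\sigma^{\top}$ positive definite, and $C$ a constant symmetric matrix collecting the remaining terms. Since the right-hand side preserves symmetry and $g(T)=0$ is symmetric, $g(t)$ stays symmetric on any interval of existence, and Picard--Lindelöf gives unique local $C^{1}$ solvability backward from $T$. To extend to all of $[0,T]$ with a uniform bound, I would linearize: introduce $X,Y:[0,T]\to\mathbb{R}^{n\times n}$ satisfying the constant-coefficient Hamiltonian system $X' = -AX + BY$, $Y' = -CX + AY$, with terminal data $X(T)=I$, $Y(T)=0$; a direct differentiation shows that $g(t)=Y(t)X(t)^{-1}$ solves (\ref{2.18a}) wherever $X(t)$ is invertible. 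Because $(X,Y)$ comes from matrix exponentials of a constant block matrix, they are smooth and uniformly bounded on $[0,T]$, so the task reduces to a uniform lower bound on $|\det X(t)|$. This is the main obstacle: $X(T)=I$ is invertible, and one must propagate invertibility backward to $t=0$, e.g.\ via a continuity argument. A slicker alternative is the Feynman--Kac route already at hand in (\ref{phijia}): under $\bar{\mathbb{P}}$, $S$ is a Gaussian mean-reverting process, and standard moment formulae for quadratic functionals of O-U processes show $\varphi_1(t,S)$ is finite, smooth, and strictly positive on $[0,T]\times\mathbb{R}^n$. The ansatz (\ref{2.15}) then recovers $g,f,f_0$ as, respectively, half the Hessian, the gradient, and the value of $\log\varphi_1(t,\cdot)$ at $S=0$, and their continuity on the compact interval $[0,T]$ yields the required uniform bounds.

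With $g$ uniformly bounded, equation (\ref{2.18b}) reads $f'(t)=M(t)f(t)+\beta_{f}(t)$ where $M(t) = (1-\gamma)^{-1}\operatorname{diag}(\alpha) - 2g(t)\sigma\sigma^{\top}$ and $\beta_{f}(t)$ is a continuous function of $g(t)$; both are uniformly bounded on $[0,T]$ by the previous step. Gronwall's inequality, applied backward from $f(T)=0$, immediately yields a uniform bound on $\|f(t)\|$. Finally, once $g$ and $f$ are both bounded, the right-hand side of (\ref{2.18c}) is a continuous bounded scalar function on $[0,T]$, so $f_0(t) = -\int_{t}^{T} f_0'(u)\,\mathrm{d}u$ is controlled by $T\sup_{[0,T]}|f_0'|$, completing the proof. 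The crux of the argument is the no-blow-up/non-singularity statement in the first step; I would execute it via the Feynman--Kac representation, which avoids delicate determinant tracking in the Hamiltonian linearization.
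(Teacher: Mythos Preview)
Your hierarchical plan---bound $g$ first, then $f$ via Gronwall on a linear ODE, then $f_0$ by integrating a bounded right-hand side---is exactly the paper's, and your treatment of $f$ and $f_0$ matches it. For $g$, however, the paper is far more cursory than either of your proposals: it simply observes that after the substitution $t\mapsto T-t$ the Riccati equation has ``a finite, quadratically decreasing term alongside a linearly increasing term'' and invokes Gronwall's lemma directly. Your Hamiltonian linearization $g=YX^{-1}$ is a more rigorous and standard route for matrix Riccati equations, and you correctly identify non-singularity of $X(t)$ on $[0,T]$ as the real content.

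Your preferred Feynman--Kac route, though, has a circularity problem relative to the lemma's stated hypotheses. The representation (\ref{phijia}) for $\varphi_1$ is derived in the paper only \emph{after} assuming the Novikov condition (\ref{2.11}), which is not a consequence of Assumptions \ref{a2.1}, \ref{a2.2}, \ref{a2.4}; the paper explicitly flags (\ref{2.11}) as an additional hypothesis, later absorbed into the verification theorem via (\ref{4.2}) and (\ref{4.7}). Moreover, the claim that ``standard moment formulae show $\varphi_1(t,S)$ is finite'' is not automatic: $H(S)$ is quadratic in $S$ with a leading coefficient depending on $\varrho\in\mathbb{S}^n$, and for adversarial $\varrho$ the expectation $\bar{\mathbb{E}}[\exp\{\int_t^T H(S(\tau))\,\mathrm{d}\tau\}]$ can diverge---this is exactly the finite-time blow-up of the Riccati equation you are trying to exclude, and conditions like (\ref{4.3})--(\ref{4.4}) are there precisely to rule it out. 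So invoking Feynman--Kac here imports assumptions beyond those in the lemma. Of your two approaches, the Hamiltonian linearization stays within the stated hypotheses, but the no-conjugate-point step (invertibility of $X$ on all of $[0,T]$) still needs an actual argument, not just continuity from $X(T)=I$; the paper sidesteps this entirely with its brief sign-structure-plus-Gronwall sketch.
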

			It is straightforward to verify that Lemma \ref{l3.3} follows naturally. Upon substituting $t = T - t$, the function $g(t)$ is characterized by a finite, quadratically decreasing term alongside a linearly increasing term. Applying Gronwall's lemma \cite{c680dbbe-119c-31e0-a97a-d790b679674f}, we conclude that $g(t)$ remains uniformly bounded in Frobenius norm over the compact interval $[0, T]$. Similarly, we know that the functions $f(t)$ and $f_0(t)$ are likewise uniformly bounded in Frobenius norm on $[0, T]$.
        \begin{remark} \label{rem:relation}
         Once $g,f$ and $f_0$ are determined, $\varphi_1$ and $\varphi$ can be obtained subsequently. Based on Remark \ref{rr3.1}, this ultimately allows us to derive the optimal policy. 
        \end{remark}
		
	\section{Numerical Methods for Solving ODE System }
        \label{sec4}
		In this section, we focus on solving the ODE system (\ref{2.18a})-(\ref{2.18c}). We present hybrid numerical approaches that integrate exponential Rosenbrock-type methods \cite{doi:10.1137/080717717,LI2021113360} with Runge-Kutta methods to numerically solve (\ref{2.18a})-(\ref{2.18c}). Moreover, the convergence theorem is established.
		\par We observe that (\ref{2.18a}) conforms to the class of symmetric matrix Riccati equations discussed in \cite{LI2021113360}. It presents two numerical methods: ExpEuler, which is second-order, and Erow3, which is third-order. These two methods are particularly well-suited for large stiff ODEs like (\ref{2.18a}), outperforming standard Runge-Kutta methods in such scenarios. Therefore, we employ ExpEuler/Erow3 to solve (\ref{2.18a}). Subsequently, we apply Runge-Kutta methods to solve equations (\ref{2.18b}) and (\ref{2.18c}), as they are well-posed linear ODEs. To preserve the desired order of convergence, we employ second-order and third-order Runge-Kutta schemes, respectively. For simplicity, we denote the ExpEuler method combined with the RK2 scheme by ExpEuler-RK2, and the Erow3 method combined with the RK3 scheme by Erow3-RK3.
		\par To facilitate a clearer presentation of our algorithms, we denote right-hand side of (\ref{2.18a}), (\ref{2.18b}), and (\ref{2.18c}) by $G(g(t)),F(g(t),f(t))$, and $F_{0}(g(t),f(t))$, respectively. And for the sake of completeness in the algorithmic framework, we first revisit the numerical schemes of ExpEuler and Erow3, as introduced in \cite{LI2021113360}. Let $h$ be the time discretization step size and $t_k=hk$. For a given point $g_k$ in the state space, (\ref{2.18a}) is rewritten as 
        \begin{equation*}
            g'(t)=\mathcal{S}_{k}(g)+\mathcal{G}_{k}(g),
        \end{equation*}
        where $\mathcal{S}_{k}(g)$ denotes the Frechet derivative of $G(g)$ at $g_{k}$ and $\mathcal{G}_{k}(g)$ denotes the nonlinear remainder $G(g)-\mathcal{S}_{k}(g)$. Furthermore, exponential integrator is defined as follows: 
        \begin{equation*}
            \mathscr{Y}_j(z)=\int_{0}^{1}\exp\{(1-\theta)z\}\frac{\theta^{j-1}}{(j-1)!}\,\mathrm{d}\theta.
        \end{equation*}
        Then denote the second-order and third-order approximations of $g(t_k)$ by $g_{k}^{(2)}$ and $g_{k}^{(3)}$, respectively. Li et al. \cite{LI2021113360} provide following second-order ExpEuler scheme
		\begin{equation*}
        g_{k+1}^{(2)}=g_{k}^{(2)}+h\mathscr{Y}_1(h\mathcal{S}_{k})(G(g_{k}^{(2)})),
		\end{equation*}
		and third-order Erow3 scheme
		\begin{align*}
			\hat{g}_{k+1}&=g_{k}^{(3)}+h\mathscr{Y}_1(h\mathcal{S}_{k})(G(g_{k}^{(3)})), \hspace{2em} \\
		     g_{k+1}^{(3)}&=\hat{g}_{k+1}+h\mathscr{Y}_1(h\mathcal{S}_{k})(G(g_{k}^{(3)}))+2h\mathscr{Y}_3(h\mathcal{S}_{k})\left(\mathcal{G}_{k}(\hat{g}_{k+1})-\mathcal{G}_{k}(g_{k}^{(3)})\right). 
		\end{align*}
		 \par Now we establish numerical schemes for $f(t),f_{0}(t)$. For $p=2,3$, denote the $p$th-order approximations of $f(t)$ and $f_{0}(t)$ by $f_{k}^{(p)}$ and $f_{0,k}^{(p)}$, respectively.
		The scheme of ExpEuler-RK2 and Erow3-RK3 for solving $f(t)$ takes the form of
		\begin{align}			f_{k+1}^{(p)}&=f_{k}^{(p)}+h\sum_{i=1}^{p}d_{i}^{(p)}y_{k,i}^{(p)}, \label{y0} \\
        y_{k,i}^{(p)}&=F\left(g_{k+l_{i}^{(p)}}^{(p)},f_{k}^{(p)}+h\sum_{j=1}^{i-1}m_{ij}^{(p)}y_{k,j}^{(p)}\right),\hspace{2em}p=2,3.
        \label{y1}
		\end{align}
		where $d_{i}^{(p)},l_{i}^{(p)},m_{ij}^{(p)}$ represents the coefficients for Runge-Kutta schemes with order $p$. We adopt the coefficients of the midpoint method and trapezoidal rule for the cases $p=2$ and $p=3$, respectively (see \cite{BUTCHER1996247}). Similarly, the ExpEuler-RK2 and Erow3-RK3 scheme for solving $f_{0}(t)$ is
		\begin{align*}			f_{0,k+1}^{(p)}&=f_{0,k}^{(p)}+h\sum_{i=1}^{p}d_{i}^{(p)}z_{k,i}^{(p)}, \\	z_{k,i}^{(p)}&=F_{0}\left(g_{k+l_{i}^{(p)}}^{(p)},f_{k+l_{i}^{(p)}}^{(p)}\right),\hspace{2em}p=2,3.
		\end{align*}
		 \par To establish the convergence theorem, we give following lemma first.
        \begin{lemma}
            Under Assumptions \ref{a2.1}, \ref{a2.2} and \ref{a2.4}, the function $F(g, f)$ is Lipschitz continuous with respect to $g$ and $f$, with corresponding Lipschitz constants $L^g$ and $L^f$. Moreover, the function $F_0(g, f)$ is Lipschitz continuous in $g$ and $f$, with corresponding Lipschitz constants $L^{g}_0$ and $L_{0}^f$.
            \label{lip}
        \end{lemma}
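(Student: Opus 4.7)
\textbf{Proof proposal for Lemma \ref{lip}.}

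The plan is to exploit the fact that $F(g,f)$ and $F_0(g,f)$ are polynomial in $(g,f)$ of degree at most two, and that Lemma \ref{l3.3} already confines the admissible arguments to a bounded region in $\mathbb{R}^{n\times n} \times \mathbb{R}^n$. By Lemma \ref{l3.3}, there exist constants $M_g, M_f > 0$ such that $\|g(t)\| \le M_g$ and $\|f(t)\| \le M_f$ for every $t \in [0,T]$, so it suffices to verify Lipschitz continuity on the compact set $\mathcal{B} \triangleq \{(g,f) : \|g\| \le M_g,\ \|f\| \le M_f\}$.

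For $F$, I would first split the right-hand side of (\ref{2.18b}) into the affine part in $(g,f)$ (the terms $\frac{1}{1-\gamma}\operatorname{diag}(\alpha)f$, $-2g\operatorname{diag}(\alpha)w$, $-\frac{2\gamma}{1-\gamma}g[\operatorname{diag}(\alpha)\mu - re]$, and the two constant terms), and the single bilinear term $-2g\sigma\sigma^{\top}f$. The affine part is trivially Lipschitz with constants controlled by $\|\operatorname{diag}(\alpha)\|$, $\|\sigma\sigma^{\top}\|$, $\gamma$ and the other fixed model parameters. For the bilinear term I would use the identity
\begin{equation*}
g_1\sigma\sigma^{\top}f_1 - g_2\sigma\sigma^{\top}f_2 = (g_1 - g_2)\sigma\sigma^{\top}f_1 + g_2\sigma\sigma^{\top}(f_1 - f_2),
\end{equation*}
followed by submultiplicativity of the Frobenius norm and the bounds $\|f_1\| \le M_f$, $\|g_2\| \le M_g$. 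Summing the contributions produces explicit Lipschitz constants $L^g$ (proportional to $\|\operatorname{diag}(\alpha)w\| + \|\operatorname{diag}(\alpha)\mu - re\| + M_f\|\sigma\sigma^{\top}\|$) and $L^f$ (proportional to $\|\operatorname{diag}(\alpha)\| + M_g\|\sigma\sigma^{\top}\|$).

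For $F_0$, the same strategy applies, with one additional ingredient: the quadratic form $-\tfrac{1}{2}f^{\top}\sigma\sigma^{\top}f$. Here I would use the polarization-type identity
\begin{equation*}
f_1^{\top}\sigma\sigma^{\top}f_1 - f_2^{\top}\sigma\sigma^{\top}f_2 = (f_1-f_2)^{\top}\sigma\sigma^{\top}f_1 + f_2^{\top}\sigma\sigma^{\top}(f_1-f_2),
\end{equation*}
which, combined with $\|f_i\| \le M_f$, yields a bound linear in $\|f_1 - f_2\|$. The trace term $-\operatorname{tr}\{g\sigma\sigma^{\top}\}$ is linear in $g$ and controlled by Cauchy--Schwarz in the Frobenius inner product, while the remaining terms are either linear in $f$ or constant. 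Collecting everything gives constants $L_0^g$ and $L_0^f$ depending only on the fixed market parameters and the a priori bounds $M_g, M_f$.

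There is no real analytical obstacle here: the only point requiring care is that the mappings are \emph{globally} polynomial but only \emph{locally} Lipschitz, so the argument genuinely relies on Lemma \ref{l3.3} to restrict attention to a bounded set. The main routine work is bookkeeping---gathering each coefficient appearing in (\ref{2.18b}) and (\ref{2.18c}) into explicit expressions for $L^g, L^f, L_0^g, L_0^f$.
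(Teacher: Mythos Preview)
Your proposal is correct and follows essentially the same approach as the paper: both rely on Lemma \ref{l3.3} to restrict $(g,f)$ to a bounded set and then exploit the fact that $F$ and $F_0$ are polynomial (affine plus bilinear/quadratic) in their arguments. The only cosmetic difference is that the paper packages (\ref{2.18b}) compactly as $F(g,f)=Af+gBf+gC+D$, so that fixing one variable makes $F$ manifestly affine in the other and the Lipschitz bounds $\|(\hat g-g)(Bf+C)\|\le L^g\|\hat g-g\|$ and $\|(A+gB)(\hat f-f)\|\le L^f\|\hat f-f\|$ drop out immediately, whereas you do the equivalent term-by-term bookkeeping via the bilinear identity.
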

        \begin{proof} 
        We present the proof for $F(g, f)$ only, as the argument for $F_0(g, f)$ proceeds in a similar manner. Rewrite $F(g,f)$ as the form of
			\begin{equation*}
F(t,g,f)=Af+gBf+gC+D,
			\end{equation*}
			with 
			\begin{align*}
			    &A=\frac{\operatorname{diag}(\alpha)}{1-\gamma}, \quad B=-2\sigma\sigma^{\top},\quad C=-2\operatorname{diag}(\alpha)w-2\gamma\frac{\operatorname{diag}(\alpha)\mu-re}{1-\gamma}, \\
			&D= \frac{\gamma \operatorname{diag}(\alpha)(\sigma\sigma^{\top})^{-1}\left[\operatorname{diag}(\alpha)\mu-re\right]}{(1-\gamma)^{2}}+ \frac{1}{1-\gamma} \rho.
			\end{align*}
            By the boundedness of functions $g(t)$ and $f(t)$, we have
            \begin{align*}
            \|F(\hat{g},f)-F(g,f)\|=&\left\|(\hat{g}-g)(B_1f+C_1)\right\|\leq L^g\|\hat{g}-g\|, \\
            \|F(g,\hat{f})-F(g,f)\|=&\left\|(A_1+gB_1)(\hat{f}-f)\right\|\leq L^f\|\hat{f}-f\|,
            \end{align*}
            where $L^g$ and $L^f$ are Lipschitz constants.
        \end{proof}
		\begin{theorem}
          Suppose Assumptions \ref{a2.1}, \ref{a2.2}, and \ref{a2.4} hold, and that the time discretization step size $h$ satisfies
          \begin{equation}
              \frac{1}{2}K(K-1) h^{p+1}\leq C_{\mathcal{H}},\hspace{2em} h < \frac{C_p}{L^f},\hspace{2em} p = 2,3,  
              \label{th441}
          \end{equation}
          where $K$ is the number of time discretization steps, $C_{\mathcal{H}}$ is a sufficiently small constant and $C_p > 0$ is a method-dependent constant that may depend on the order $p$. Then ExpEuler-RK2 and Erow3-RK3 is globally second-order and third-order convergent.
            \label{t4.1}
		\end{theorem}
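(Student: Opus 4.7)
The plan is to prove convergence in a cascading fashion, following the hierarchical dependence of (\ref{2.18a})--(\ref{2.18c}): $g$ is computed independently, then $f$ depends on $g$, and finally $f_0$ depends on both. For the matrix Riccati equation (\ref{2.18a}), the uniform bound $\|g_k^{(p)}-g(t_k)\|_F\le C_g h^p$ for $p=2,3$ follows directly from the convergence theorems for ExpEuler and Erow3 in \cite{LI2021113360}. The accumulation condition $\frac{1}{2}K(K-1)h^{p+1}\le C_{\mathcal{H}}$ in (\ref{th441}) is precisely the hypothesis demanded by that analysis: the quadratic term $-2g\sigma\sigma^{\top}g$ introduces a pairwise compounding in the error bounded by the $\binom{K}{2}$ factor, while Lemma \ref{l3.3} supplies the uniform boundedness of the exact solution needed to close the argument.

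For (\ref{2.18b}), I would introduce an auxiliary iterate $\tilde f_k^{(p)}$ produced by the same Runge--Kutta scheme but fed with the \emph{exact} values $g(t_{k+l_i^{(p)}})$ in place of $g_{k+l_i^{(p)}}^{(p)}$. A classical Butcher-type analysis of the midpoint ($p=2$) and three-stage ($p=3$) schemes applied to the linear ODE (\ref{2.18b}), whose coefficients are smooth and bounded by Lemma \ref{l3.3}, yields $\|\tilde f_k^{(p)}-f(t_k)\|=O(h^p)$ under the stability condition $h<C_p/L^f$. The actual iterate $f_k^{(p)}$ differs from $\tilde f_k^{(p)}$ only through the substitution of approximated $g$-values in (\ref{y1}). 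By Lemma \ref{lip}, each stage discrepancy is bounded by $L^g\|g_{k+l_i^{(p)}}^{(p)}-g(t_{k+l_i^{(p)}})\|_F$ plus $L^f$ times earlier-stage perturbations; substituting the Stage 1 bound gives per-step local perturbation of order $h\cdot L^g\cdot h^p = O(h^{p+1})$. A discrete Gronwall inequality, valid because $hL^f<C_p$, sums these local contributions to $\|f_k^{(p)}-\tilde f_k^{(p)}\|=O(Kh\cdot h^p)=O(h^p)$, and the triangle inequality delivers $\|f_k^{(p)}-f(t_k)\|=O(h^p)$.

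Stage 3 for $f_0$ then repeats the same template with $F_0$ in place of $F$ and $L_0^g, L_0^f$ in place of $L^g, L^f$: since $g$ and $f$ are now known to order $h^p$, Lemma \ref{lip} together with a final discrete Gronwall step propagates the rate to $|f_{0,k}^{(p)}-f_0(t_k)|=O(h^p)$. The most delicate point, and the main anticipated obstacle, is the interface between Stage~1 and Stage~2: the RK stage nodes $t_k+l_i^{(p)}h$ in general lie strictly between grid points (e.g.\ $l_i^{(2)}=1/2$ for the midpoint rule), so one must either run the exponential Rosenbrock integrator on a refined sub-grid aligned with the RK stages or interpolate $g$ with sufficient accuracy, and then verify that the order-$p$ accuracy of $g$ survives this evaluation. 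Combining that preservation with the quadratic accumulation condition in (\ref{th441}), and ensuring that the cascaded Gronwall amplifications across the three stages do not degrade the global rate, is the most delicate bookkeeping in the argument.
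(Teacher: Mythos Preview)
Your proposal is correct and follows essentially the same route as the paper: introduce an auxiliary iterate fed the exact $g$-values (the paper's $\hat f_k^{(p)}$), invoke standard Runge--Kutta convergence for that piece, bound the difference $\|\hat f_k^{(p)}-f_k^{(p)}\|$ via the Lipschitz constants of Lemma~\ref{lip} and a discrete Gronwall recursion, then cascade the same argument to $f_0$. The intermediate-node concern you flag (evaluating $g$ at $t_k+l_i^{(p)}h$ for non-integer $l_i^{(p)}$) is not addressed in the paper either---it simply writes $g_{k+l_i^{(p)}}^{(p)}$ without comment---so your caution there is warranted but does not represent a divergence in strategy.
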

		\begin{proof} 							
			Under the first assumption of (\ref{th441}), the ExpEuler and Erow3 methods achieve global convergence orders of two and three, respectively \cite{doi:10.1137/080717717}. Therefore, for $p=2,3$, we have 
            \begin{equation*}
                \|g(t_{k})-g_{k}^{(p)}\| \leq ch^{p}.
            \end{equation*}
            Let $\hat{y}_{k,i}^{(p)}$ denote the value of (\ref{y1}) by substituting $g_{k+l_{i}^{(p)}}^{(p)}$ with $g(t_{k+l_{i}^{(p)}})$. Correspondingly, let $\hat{f}_i^{(p)}$ denote the numerical solution computed from (\ref{y0}) substituting $y_{k,i}^{(p)}$ with $\hat{y}_{k,i}^{(p)}$. By Lemma \ref{lip}, we know that function $F$ is Lipschitz continuous with respect to $f$. Additionally, by the stability theorem for Runge-Kutta methods in \cite[Chapter 10]{GriffithsHigham2010}, we have that 
            \begin{equation*}
                \|f(t_{k})-\hat{f}_{k}^{(p)}\| \leq ch^{p},
            \end{equation*}
            under assumptions of Lemma \ref{l3.3} and the second assumption of (\ref{th441}). Then for $p=2,3$, we obtain error bound
			\begin{align}
				\|f(t_{k})-f_{k}^{(p)}\| 
                =& \|f(t_{k})-\hat{f}_{k}^{(p)}+\hat{f}_{k}^{(p)}-f_{k}^{(p)}\|  \notag \\
				\leq& \|f(t_{k})-\hat{f}_{k}^{(p)}\|+\|\hat{f}_{k}^{(p)}-f_{k}^{(p)}\| \notag \\
				 \leq& ch^{p} + \|\hat{f}_{k}^{(p)}-f_{k}^{(p)}\|.
                \label{error1}
			\end{align}
            where $c$ is some positive constant determined by Lipschitz constant $L^f$. Throughout the paper, this $c$ may change from one instance to another, depending on the specific terms involved. Denote $\|\hat{f}_{k}^{(p)}-f_{k}^{(p)}\|$ by $e_k^{(p)}$, we have
            \begin{align}
                e_{k+1}^{(p)} \leq&e_{k}^{(p)}+ h \sum_{i=1}^{p}d_i^{(p)}\|\hat{y}_{k,i}^{(p)}-y_{k,i}^{(p)}\| \notag \\
                =&e_{k}^{(p)}+ h \sum_{i=1}^{p}d_i^{(p)}\left\|F\left(g(t_{k+l_{i}^{(p)}}),\hat{f}_{k}^{(p)}+h\sum_{j=1}^{i-1}m_{ij}^{(p)}\hat{y}_{k,j}^{(p)}\right) \right. \notag\\
                 &\hspace{8em}\left.-F\left(g_{k+l_{i}^{(p)}}^{(p)},f_{k}^{(p)}+h\sum_{j=1}^{i-1}m_{ij}^{(p)}y_{k,j}^{(p)}\right)\right\| \notag \\  
                \leq& e_{k}^{(p)}+h \sum_{i=1}^{p}d_i^{(p)}\left(cL^{g}h^{p}+L^{f}e_{k}^{(p)}+hL_f\sum_{j=1}^{i-1}m_{ij}^{(p)}\|\hat{y}_{k,j}^{(p)}-y_{k,j}^{(p)}\|\right) \notag \\
                \leq& e_{k}^{(p)}+ch^{p+1}+che_{k}^{(p)}+ch^2\left(\sum_{i=1}^{p} \sum_{j=1}^{i-1}\left\|\hat{y}_{k,j}^{(p)}-y_{k,j}^{(p)}\right\|\right) \notag \\
                \leq & ce_{k}^{(p)}\sum_{i=0}^{p}h^i+ch^p\sum_{i=1}^{p}h^i \notag \\
                \leq &(1+ch^2)e_{k}^{(p)}+ch^{p+1}. \label{rec}
            \end{align}
          Note that the transition from the third-to-last line to the penultimate line in estimate (\ref{rec}) follows from the application of a recursive estimation approach. For any $1 \leq k \leq K$, by Gronwall's Lemma, we obtain
            \begin{align}
                e_{k}^{(p)} \leq& ch^{p-1}[(1+ch^2)^k-1] \notag\\
                \leq & ch^{p-1}\left(\exp\{ckh^2\}-1\right) \notag \\
                \leq & ch^{p-1} \left(\exp\{cTh\}-1\right) \notag \\
                \leq &ch^{p-1} \left(cTh+O(h^2)\right)\notag \\
                \leq& ch^{p}.
                \label{error2}
            \end{align}
            Combining estimate (\ref{error1}) and (\ref{error2}), the error bound 
            \begin{equation*}
                \|f(t_{k})-f_{k}^{(p)}\| \leq  ch^{p}.
            \end{equation*}
            We observe that the function $F_0(g,f)$ is independent of $f_0$. Similarly, we obtain error bound
			\begin{equation*}
				\|f_{0}(t_{k})-f_{0,k}^{(p)}\| \leq ch^{p}.
			\end{equation*} 
		\end{proof}
\begin{remark} \label{sc}
It is worth noting that when $\sigma\sigma^{\top}$ is diagonal and $\varrho=0_{n \times n}$, the structure of $H(S)$ admits matrix function $g(t)$ to be diagonal as well, (\ref{2.18a}) can be decoupled into $n$ independent $1$-dimensional Riccati equations \cite{Benth01072005}. Consequently, closed-form solutions become tractable. We briefly examine this special case and derive the corresponding closed-form solutions of the ODE system, which can serve as a benchmark for validating the accuracy of our numerical methods in this particular scenario.
Let $(\sigma\sigma^{\top})_{ii}=q_{i}$, and $g(t)=\operatorname{diag}(g_{11}(t),...,g_{nn}(t))$. Substituting $g(t)$ back into (\ref{2.18a})-(\ref{2.18c}) and after some calculations, we have
		\begin{subequations}
			\begin{align}
				g_{ii}(t)=&\frac{\gamma\alpha_{i}}{2(1-\gamma)q_{i}}\frac{\mathrm{sinh}\left[\alpha_{i}(T-t)/\sqrt{1-\gamma}\right]}{\mathrm{sinh}\left[\alpha_{i}(T-t)/\sqrt{1-\gamma}\right]+\sqrt{1-\gamma}\mathrm{cosh}\left[\alpha_{i}(T-t)/\sqrt{1-\gamma}\right]},
                \label{3.1a}\\
				f_{i}(t)=&\int_{t}^{T}\zeta_{i}(u)\exp\left\{\int_{t}^{u}\kappa_{i}(s)\,\mathrm{d}s\right\}\,\mathrm{d}u,
                \label{3.1b}\\
				f_{0}(t)=&\int_{t}^{T}\left\{\sum_{i=1}^{n}\left[w_{i}\alpha_{i}+\frac{\gamma}{1-\gamma}(\mu_{i}\alpha_{i}-r)\right]f_{i}(u)+\frac{1}{2}\sum_{i=1}^{n}q_{i}f_{i}^{2}(u)+\sum_{i=1}^{n}q_{i}g_{ii}(u)\right\}\,\mathrm{d}u\notag \\
				&+\left[\sum_{i=1}^{n}\frac{\gamma(\mu_{i}\alpha_{i}-r)^{2}}{2(1-\gamma)^{2}q_{i}}+\frac{-\rho_{0}+r\gamma}{1-\gamma}\right](T-t),
                \label{3.1c}
			\end{align}
		\end{subequations}
		where 
        \begin{align*}
            \kappa_{i}(t)&=2q_{i}g_{ii}(t)-\alpha_{i}/(1-\gamma),\\
            \zeta_{i}(t)&=2[\alpha_{i}w_{i}+\gamma(\mu_{i}\alpha_{i}-r)/(1-\gamma)]g_{ii}(t)-\gamma\alpha_{i}(\mu_{i}\alpha_{i}-r)/[(1-\gamma)^{2}q_{i}]-\rho_{i}(1-\gamma).
        \end{align*}
\end{remark}
        By combining equations (\ref{2.8}), (\ref{phijia}), (\ref{3phi}), and (\ref{2.15}), when the functions $g$, $f$, and $f_0$ are obtained either analytically or numerically, the solution to the HJB equation (\ref{2.5})-(\ref{2.51}) admits the following explicit form:
		\begin{align}
				v(t,x,S,\psi)=&\gamma^{-1}\psi^{1-\gamma}x^{\gamma}[\varphi_1(t,S)+\varphi_2(t,S)]^{1-\gamma}  \notag \\
                =&\gamma^{-1}\psi^{1-\gamma}x^{\gamma}\Big[\exp\left\{f_{0}(t)+f^{\top}(t)S+S^{\top}g(t)S\right\} \notag \\
                &\hspace{5em}+\int_{t}^{T}\exp\left\{f_{0}(u)+f^{\top}(u)S+S^{\top}g(u)S\right\}\,\mathrm{d}u\Big]^{1-\gamma}.
            \label{4.1}
		\end{align}
		 However, (\ref{4.1}) remains a candidate solution for the value function of the original optimal control problem. Therefore, a verification theorem is essential to validate the optimality of (\ref{4.1}).
	\section{Verification theorem}
        \label{sec5}
		In this section, we establish a rigorous verification theorem that provides sufficient conditions for the candidate solutions to be optimal. Additionally, we prove that verification theorem is valid for numerical solutions as well.
		\par To facilitate the proof of the verification theorem, we first establish the following lemmas.
        \begin{lemma}(\textbf{cf.}\,\cite{1130000794501893504})
        Let $W \sim N(\eta, \Sigma)$ be a $n$-dimensional random variable and given $R\in\mathbb{S}^{n} $, then the moment-generating function of $Q=W^T R W$ is defined as 
\begin{equation*}
\begin{aligned}
M_Q(s) =& \mathbb{E}[\exp\{sQ\}] \\
=&
\begin{cases}
    |I - 2sR\Sigma|^{-\frac{1}{2}} \exp\left\{-\frac{1}{2}\eta^\top[I - (I - 2sR\Sigma)^{-1}]\Sigma^{-1}\eta\right\}, & s \in (s_{\min}, s_{\max}), \\
    +\infty, & \text{otherwise}.
\end{cases}
\end{aligned}
\end{equation*}
Here
\begin{equation*}
\begin{aligned}
    \begin{aligned}
    s_{\min} = \begin{cases} \dfrac{1}{2\lambda_{\min}(R \Sigma )}, &  \lambda_{\min}(R\Sigma ) < 0 ,\\ -\infty, &  \lambda_{\min}(R \Sigma ) \ge 0, \end{cases}
     \end{aligned}
     \quad
    s_{\max} = \begin{cases} \dfrac{1}{2\lambda_{\max}(R \Sigma )}, &  \lambda_{\max}(R \Sigma ) > 0, \\ +\infty, &  \lambda_{\max}(R \Sigma ) \leq 0, \end{cases} \\
    \end{aligned}
\end{equation*}
where $\lambda_{\min}(R \Sigma )$ and $\lambda_{\max}(R \Sigma )$ denote the smallest and largest eigenvalue of matrix $R \Sigma $.
            \label{l5.3}
		\end{lemma}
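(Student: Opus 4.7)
The proof is a standard completing-the-square calculation for a Gaussian integral, together with a spectral analysis to pin down the domain of convergence. I would proceed as follows.

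First, I would write the MGF explicitly as a Gaussian integral:
\begin{equation*}
M_Q(s) = (2\pi)^{-n/2}|\Sigma|^{-1/2}\int_{\mathbb{R}^n} \exp\!\left\{ s\,w^\top R w - \tfrac{1}{2}(w-\eta)^\top \Sigma^{-1}(w-\eta)\right\}\,\mathrm{d}w.
\end{equation*}
Expanding and grouping quadratic and linear terms in $w$, the exponent becomes $-\tfrac{1}{2}\bigl[w^\top A w - 2\eta^\top \Sigma^{-1} w + \eta^\top \Sigma^{-1}\eta\bigr]$, where $A:=\Sigma^{-1}-2sR$. The integral is finite if and only if $A$ is positive definite; I would record this as the necessary and sufficient convergence condition, and return to it at the end.

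Assuming $A\succ 0$, I would complete the square in $w$ around the point $w^*:=A^{-1}\Sigma^{-1}\eta$, so that the exponent equals
\begin{equation*}
-\tfrac12(w-w^*)^\top A(w-w^*) \;+\; \tfrac12\eta^\top \Sigma^{-1} A^{-1}\Sigma^{-1}\eta \;-\; \tfrac12\eta^\top \Sigma^{-1}\eta.
\end{equation*}
Integrating the Gaussian factor yields $(2\pi)^{n/2}|A|^{-1/2}$, so the prefactor becomes $|\Sigma|^{-1/2}|A|^{-1/2} = |\Sigma A|^{-1/2} = |I-2s\Sigma R|^{-1/2}$, which coincides with $|I-2sR\Sigma|^{-1/2}$ by the Sylvester determinant identity $\det(I+XY)=\det(I+YX)$. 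For the residual scalar term, a short algebraic check (multiply out using $A\Sigma = I-2sR\Sigma$) gives $\Sigma^{-1}-\Sigma^{-1}A^{-1}\Sigma^{-1} = [I-(I-2sR\Sigma)^{-1}]\Sigma^{-1}$, producing exactly the exponential factor in the statement.

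Finally, I would analyze when $A\succ 0$. Since $A$ is similar to $\Sigma^{-1/2}A\Sigma^{-1/2}=\Sigma^{-1}(I-2s\Sigma R)\Sigma\cdot\Sigma^{-1}$ type manipulations show that $A\succ 0$ iff every eigenvalue of $I - 2s(R\Sigma)$ is positive, i.e.\ $1-2s\lambda>0$ for each eigenvalue $\lambda$ of $R\Sigma$ (noting $R\Sigma$ and $\Sigma R$ share eigenvalues, and both are similar to the symmetric matrix $\Sigma^{1/2}R\Sigma^{1/2}$, hence have real eigenvalues). Splitting into the cases $\lambda_{\max}(R\Sigma)>0$ or $\le 0$, and analogously for $\lambda_{\min}$, gives the stated bounds on $s_{\min}$ and $s_{\max}$; outside this open interval, the integrand fails to be integrable and $M_Q(s)=+\infty$. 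The main subtlety — and the one place I would be careful — is justifying that $R\Sigma$ has real eigenvalues and that convergence of the integral is genuinely controlled by them, which is why I would pass through the symmetric similarity $\Sigma^{1/2}R\Sigma^{1/2}$ rather than working with $R\Sigma$ directly.
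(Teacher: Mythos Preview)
Your argument is correct and is the standard derivation of this classical formula. Note, however, that the paper does not actually prove this lemma: it is stated with a ``cf.'' citation to the literature and is used as a known result, so there is no paper proof to compare against. Your approach (write the Gaussian density, complete the square in $w$ around $A^{-1}\Sigma^{-1}\eta$ with $A=\Sigma^{-1}-2sR$, identify the determinant via $|\Sigma A|=|I-2s\Sigma R|=|I-2sR\Sigma|$, and characterize convergence through the spectrum of $\Sigma^{1/2}R\Sigma^{1/2}$) is exactly the textbook route and would be accepted without further comment.

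One small cosmetic point: in your convergence paragraph the sentence beginning ``Since $A$ is similar to $\Sigma^{-1/2}A\Sigma^{-1/2}=\ldots$'' is garbled --- what you want is that $A\succ 0$ iff the congruent matrix $\Sigma^{1/2}A\Sigma^{1/2}=I-2s\,\Sigma^{1/2}R\Sigma^{1/2}\succ 0$, and then use that $\Sigma^{1/2}R\Sigma^{1/2}$ is symmetric with the same (real) eigenvalues as $R\Sigma$. You say this correctly at the end of the paragraph, so just tidy the opening clause.
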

        \begin{lemma} \label{l5.2}
      (\textbf{cf.}\,\cite{Vatiwutipong2019}) For any terminal time $\tau\in[t,T]$, $S(\tau)$ has an $n$-dimensional normal distribution with mean vector 
      \begin{equation*}
          \eta(\tau)=\exp\{-(\tau-t)\operatorname{diag}(\alpha)\}S(t)+\left(I-\exp\{-(\tau-t)\operatorname{diag}(\alpha)\}\right)w,
      \end{equation*}
      and covariance matrix
              \begin{equation*}
\left(\Sigma(\tau)\right)_{ij}=\frac{(\sigma_{i}\sigma_{j}^{\top})(1-\exp\{-(\tau-t)(\alpha_{i}+\alpha_{j})\})}{\alpha_{i}+\alpha_{j}}.
	\end{equation*}
      Here $\Sigma(\tau) \in\mathbb{S}_{++}^{n} $, where $\mathbb{S}_{++}^{n} $ denotes the set of all symmetric positive definite $n \times n $ matrices.
		\end{lemma}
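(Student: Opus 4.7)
The plan is to solve the linear SDE (\ref{2.0}) explicitly, read off the distribution of $S(\tau)$ from that solution, and then identify the mean and covariance with the claimed expressions. Since the drift is affine in $S$ and the diffusion coefficient $\sigma$ is deterministic and constant, $S(\tau)$ must be Gaussian for every $\tau \geq t$, so the only real work is to compute the two moments.

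First, I would introduce the integrating factor $\exp\{u\operatorname{diag}(\alpha)\}$ and apply Itô's formula to $Y(u)=\exp\{u\operatorname{diag}(\alpha)\}S(u)$. Because $\operatorname{diag}(\alpha)$ commutes with its matrix exponential, the $S(u)$-dependent drift cancels and one obtains $dY(u) = \operatorname{diag}(\alpha)\exp\{u\operatorname{diag}(\alpha)\}w\,du + \exp\{u\operatorname{diag}(\alpha)\}\sigma\,dB_u$. Integrating from $t$ to $\tau$, evaluating the deterministic integral as $(\exp\{\tau\operatorname{diag}(\alpha)\}-\exp\{t\operatorname{diag}(\alpha)\})w$, and multiplying through by $\exp\{-\tau\operatorname{diag}(\alpha)\}$, I arrive at the variation-of-constants representation
\begin{equation*}
S(\tau) = \exp\{-(\tau-t)\operatorname{diag}(\alpha)\}S(t) + \bigl(I-\exp\{-(\tau-t)\operatorname{diag}(\alpha)\}\bigr)w + \int_{t}^{\tau}\exp\{-(\tau-u)\operatorname{diag}(\alpha)\}\sigma\,dB_u.
\end{equation*}

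Next, since the integrand of the Itô integral is deterministic, the stochastic integral is an $n$-dimensional centered Gaussian vector. Taking expectations kills the Itô term and gives the claimed $\eta(\tau)$. For the covariance, I would use the multivariate Itô isometry and the fact that $\exp\{-(\tau-u)\operatorname{diag}(\alpha)\}$ is diagonal (hence self-transpose), so that
\begin{equation*}
\Sigma(\tau) = \int_{t}^{\tau}\exp\{-(\tau-u)\operatorname{diag}(\alpha)\}\,\sigma\sigma^{\top}\,\exp\{-(\tau-u)\operatorname{diag}(\alpha)\}\,du.
\end{equation*}
Reading the $(i,j)$ entry extracts a scalar factor $\exp\{-(\tau-u)(\alpha_i+\alpha_j)\}$ times $(\sigma\sigma^\top)_{ij}=\sigma_i\sigma_j^\top$; a one-line integration in $u$ then yields the stated formula.

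The final step is the positive-definiteness claim $\Sigma(\tau)\in\mathbb{S}_{++}^n$. For any nonzero $v\in\mathbb{R}^n$, $v^\top\Sigma(\tau)v = \int_t^\tau \|\sigma^\top\exp\{-(\tau-u)\operatorname{diag}(\alpha)\}v\|^2\,du$, which is strictly positive for $\tau>t$ because $\sigma$ is non-degenerate by Assumption \ref{a2.2} and the exponential factor is an invertible diagonal matrix. I do not anticipate a genuine obstacle here: the only step requiring care is handling the matrix exponentials and the Itô isometry entrywise without confusing $\sigma_i$ (the $i$th row of $\sigma$) with $(\sigma\sigma^\top)_{ij}$; once one keeps the conventions straight, every identity drops out of the explicit representation above.
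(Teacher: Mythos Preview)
Your argument is correct and is the standard derivation: variation-of-constants for the linear SDE, then Itô isometry entrywise for the covariance, and non-degeneracy of $\sigma$ for strict positive definiteness when $\tau>t$. The paper does not actually prove this lemma---it simply cites \cite{Vatiwutipong2019}---so there is nothing to compare against; your self-contained proof is exactly what that reference supplies.
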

        \begin{lemma}(\textbf{cf.}\,\cite{doi:10.1137/0325086}) 
             If condition 
			\begin{equation}
				\mathbb{E}\left[\exp\left\{\max\left\{\frac{1}{2},\frac{\gamma^{2}}{2(1-\gamma)^{2}}\right\}\int_{0}^{T}\|\sigma^{-1}(\operatorname{diag}(\alpha)(\mu-S(t))-re)\|^{2}\,\mathrm{d}t\right\}\right]<+\infty
				\label{4.2}
			\end{equation}
			holds for any $t\in[0,T]$,
             then Novikov condition (\ref{2.11}) is guaranteed and we can also define a new probability measure $\tilde{P}$ where condition (d) in Definition \ref{d2.5} is equivalent to 
			\begin{equation*}
				\tilde{\mathbb{E}}\left[\int_{0}^{T}C(t)\exp\{-rt\}\,\mathrm{d}t\right] \leq x,\quad
				\tilde{\mathbb{E}}\left[X(T)\exp\{-rT\}\right] \leq x.
			\end{equation*}
            \label{l5.4}
		\end{lemma}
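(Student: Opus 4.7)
The plan is to proceed in three stages: verifying the Novikov condition, constructing the equivalent measure $\tilde{\mathbb{P}}$ via Girsanov's theorem, and translating condition (d) into the two budget inequalities through a supermartingale argument. Set $\theta(t) = \sigma^{-1}(\operatorname{diag}(\alpha)(\mu-S(t))-re)$ for convenience. Since $\max\{1/2, \gamma^{2}/(2(1-\gamma)^{2})\} \geq \gamma^{2}/(2(1-\gamma)^{2})$, assumption (\ref{4.2}) immediately yields (\ref{2.11}) by monotonicity of the exponential; because the same maximum also dominates $1/2$, we likewise obtain $\mathbb{E}[\exp\{\tfrac{1}{2}\int_{0}^{T}\|\theta(t)\|^{2}\,\mathrm{d}t\}]<+\infty$, which is Novikov's criterion for $\theta$. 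Hence the Doléans exponential $Z(t)=\exp\{-\int_{0}^{t}\theta(s)^{\top}\,\mathrm{d}B_{s}-\tfrac{1}{2}\int_{0}^{t}\|\theta(s)\|^{2}\,\mathrm{d}s\}$ is a true $\mathbb{P}$-martingale, and we define $\mathrm{d}\tilde{\mathbb{P}}/\mathrm{d}\mathbb{P}=Z(T)$.

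By Girsanov's theorem, $\tilde{B}_{t}=B_{t}+\int_{0}^{t}\theta(s)\,\mathrm{d}s$ is a $\tilde{\mathbb{P}}$-Brownian motion. Substituting $\sigma\,\mathrm{d}B_{t}=\sigma\,\mathrm{d}\tilde{B}_{t}-(\operatorname{diag}(\alpha)(\mu-S(t))-re)\,\mathrm{d}t$ into (\ref{2.1}), the excess-return drift cancels, yielding $\mathrm{d}X(t)=rX(t)\,\mathrm{d}t-C(t)\,\mathrm{d}t+\pi^{\top}(t)\sigma\,\mathrm{d}\tilde{B}_{t}$. Applying Itô's formula to $\exp\{-rt\}X(t)$ then produces
\begin{equation*}
\exp\{-rt\}X(t)+\int_{0}^{t}\exp\{-rs\}C(s)\,\mathrm{d}s = x+\int_{0}^{t}\exp\{-rs\}\pi^{\top}(s)\sigma\,\mathrm{d}\tilde{B}_{s}\triangleq M(t),
\end{equation*}
where $M$ is a $\tilde{\mathbb{P}}$-local martingale under the integrability conditions (a)-(b) of Definition \ref{d2.5}.

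For the forward direction, suppose (d) holds. Then $X(t)\geq 0$ and $C\geq 0$ force $M(t)\geq 0$, so $M$ is a non-negative $\tilde{\mathbb{P}}$-supermartingale, and $\tilde{\mathbb{E}}[M(T)]\leq M(0)=x$. Since the two terms composing $M(T)$ are separately non-negative, each individually satisfies the stated inequality. For the converse, given the two budget inequalities, one upgrades $M$ to a true $\tilde{\mathbb{P}}$-martingale and derives the representation $\exp\{-rt\}X(t)=\tilde{\mathbb{E}}[\exp\{-rT\}X(T)+\int_{t}^{T}\exp\{-rs\}C(s)\,\mathrm{d}s\mid \mathcal{F}_{t}]$, whose right-hand side is non-negative almost surely, recovering condition (d).

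The main obstacle is the careful localization in the final step: verifying that $M$ is genuinely a $\tilde{\mathbb{P}}$-martingale—rather than only a local martingale—so that the supermartingale inequality can be sharpened into an equality for the converse direction. This hinges on leveraging the moment conditions on $\pi$ and $C$ in Definition \ref{d2.5} together with the bounded-exponential integrability of $\theta$ from (\ref{4.2}), likely through a Burkholder-Davis-Gundy estimate combined with Hölder's inequality applied to the Radon-Nikodym density $Z(T)$ to control the stochastic integral under the new measure.
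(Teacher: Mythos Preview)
The paper does not supply its own proof of this lemma; it is stated with a ``cf.'' citation to Karatzas and then used without argument. So there is nothing to compare against directly, and the question becomes whether your sketch is internally sound.

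Your treatment of the Novikov condition and the forward implication is correct: the maximum in (\ref{4.2}) dominates both $\tfrac{1}{2}$ and $\tfrac{\gamma^{2}}{2(1-\gamma)^{2}}$, Girsanov applies, the discounted-wealth-plus-cumulative-consumption process $M$ is a nonnegative $\tilde{\mathbb{P}}$-local martingale when $X\ge 0$, hence a supermartingale, and the two displayed inequalities follow by dropping one nonnegative summand at a time.

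The converse direction, however, contains a genuine gap that is \emph{not} the one you flag. Suppose you succeed in upgrading $M$ to a true $\tilde{\mathbb{P}}$-martingale. You then write
\[
e^{-rt}X(t)=\tilde{\mathbb{E}}\!\left[e^{-rT}X(T)+\int_{t}^{T}e^{-rs}C(s)\,\mathrm{d}s\;\middle|\;\mathcal{F}_{t}\right]
\]
and assert the right-hand side is nonnegative. But nonnegativity of that conditional expectation requires $e^{-rT}X(T)+\int_{t}^{T}e^{-rs}C(s)\,\mathrm{d}s\ge 0$ almost surely, which in particular needs $X(T)\ge 0$ a.s.\ --- and that is exactly part of condition (d), so the argument is circular. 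The two budget inequalities are statements about \emph{expectations} under $\tilde{\mathbb{P}}$; they do not rule out $X(T)<0$ on a set of positive measure. A simple counterexample: take $C\equiv 0$ and a portfolio driving $e^{-rT}X(T)$ to $\pm c$ with equal $\tilde{\mathbb{P}}$-probability; both inequalities hold for suitable $x$, yet $X(T)<0$ with probability $1/2$.

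In the Karatzas framework the converse is not proved for a \emph{fixed} $(\pi,C)$ at all. Rather, one shows that if the single budget constraint $\tilde{\mathbb{E}}[e^{-rT}\xi+\int_{0}^{T}e^{-rs}C(s)\,\mathrm{d}s]\le x$ holds for a target pair $(C,\xi)$ with $\xi\ge 0$, then a \emph{replicating} portfolio $\pi$ exists---constructed via the martingale representation theorem---for which the associated wealth stays nonnegative. This is how the lemma is actually invoked in the proof of Theorem~5.5: equation~(\ref{4.13}) fixes $x$ so that the optimal $(\pi^{*},C^{*})$ is financed in this sense. Your plan should therefore replace the circular conditional-expectation step with a martingale-representation construction, and the ``main obstacle'' is not the local-versus-true martingale issue but the existence of the financing portfolio itself.
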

		\begin{lemma} \label{l5.1}
			(\textbf{cf.}\,\cite{Benth01072005}) Let function $v(t,x,S,\psi)$ be a classical solution of HJB equation (\ref{2.5})-(\ref{2.51}) and the family $\left\{v\left(\tau,X^{(\pi^{*},C^{*})}(\tau),S(\tau),\psi(\tau)\right)\right\}_{\tau \in [0,T]}$ is uniformly integrable. Then we have 
            \begin{equation*}
                v(t,x,S,\psi)=V(t,x,S,\psi).
            \end{equation*}
		\end{lemma}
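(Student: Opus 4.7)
The plan is to follow the classical verification argument for stochastic control problems: first show that $v \geq V$ by proving $v(t,x,S,\psi) \geq J(\pi,C)$ for every admissible $(\pi,C)$, then show equality is attained by the feedback policy $(\pi^*,C^*)$ from (\ref{3.221})--(\ref{3.222}).

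For the upper bound, I would fix an admissible pair $(\pi,C) \in \mathcal{A}_x$ and apply Itô's formula to $v\bigl(\tau, X^{(\pi,C)}(\tau), S(\tau), \psi(\tau)\bigr)$ on the interval $[t, T\wedge\tau_n]$, where $\tau_n$ is a localizing sequence of stopping times chosen so that the stochastic-integral part is a true martingale. Collecting drift terms and using the definitions of $\mathcal{L}_S$ and $\mathcal{G}_{xS\psi}$, the drift of $v$ evaluated along the controlled trajectory is precisely $\partial_t v + rx\,\partial_x v + \beta\psi\,\partial_\psi v + \mathcal{L}_S v + \mathcal{G}_{xS\psi}v(\pi,C) - U(\tau,C(\tau))$. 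Since $v$ satisfies the HJB equation (\ref{2.5}), the sum of the first four terms plus the supremum of $\mathcal{G}_{xS\psi}v$ vanishes, so plugging in an arbitrary $(\pi,C)$ yields a drift bounded above by $-U(\tau,C(\tau))$. Taking expectations gives
\begin{equation*}
\mathbb{E}\left[v\bigl(T\wedge\tau_n, X(T\wedge\tau_n), S(T\wedge\tau_n), \psi(T\wedge\tau_n)\bigr)\right] \leq v(t,x,S,\psi) - \mathbb{E}\left[\int_t^{T\wedge\tau_n} U(u,C(u))\,\mathrm{d}u\right].
\end{equation*}

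The next step is passing to the limit $n \to \infty$. The integral on the right converges by monotone convergence (since $U \geq 0$ for CRRA with $\gamma \in (0,1)$ and $C \geq 0$). For the left-hand side I would use admissibility condition (c) together with the terminal condition $v(T,x,S,\psi)=U(T,x)$ to identify the limit as $\mathbb{E}[U(T,X(T))]$; the required uniform integrability along an arbitrary admissible trajectory can be obtained from admissibility conditions (a)--(d) and Lemma \ref{l5.4}, combined with the explicit form (\ref{4.1}) of $v$, the distributional information on $S(\tau)$ from Lemmas \ref{l5.2}--\ref{l5.3}, and the moment bounds coming from (\ref{ss}). This yields $v(t,x,S,\psi) \geq J(\pi,C)$, and taking the supremum over $\mathcal{A}_x$ gives $v(t,x,S,\psi) \geq V(t,x,S,\psi)$.

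For the reverse inequality, I would substitute the feedback policy $(\pi^*,C^*)$ defined in (\ref{3.221})--(\ref{3.222}) into the HJB equation. By construction this policy attains the supremum inside $\mathcal{G}_{xS\psi}v$, so every inequality above becomes an equality along the optimal trajectory. Here is where the uniform integrability hypothesis on the family $\{v(\tau,X^{(\pi^*,C^*)}(\tau),S(\tau),\psi(\tau))\}_{\tau\in[0,T]}$ is used directly: it lets us pass to the limit in the stopped identity without needing to re-derive integrability from scratch, giving $v(t,x,S,\psi) = J(\pi^*,C^*) \leq V(t,x,S,\psi)$. Combining the two directions yields $v = V$.

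The main obstacle I expect is the limit passage, and in particular verifying that the localizing argument actually delivers the desired identity under the stated uniform-integrability hypothesis alone. Concretely, one must ensure (i) that the local-martingale part of the Itô expansion truly vanishes in expectation after localization (a standard but tedious check using $\mathbb{E}[\int_0^T \|\pi(t)\|^2 \,\mathrm{d}t] < \infty$ from admissibility and the explicit form of $D_x v$ and $D_{xS}^2 v$ derived from (\ref{2.8})), and (ii) that the terminal value $v(T\wedge\tau_n,\cdot)$ can indeed be replaced by $U(T,X(T))$ in the limit. Care is also needed because $v$ blows up when $x \downarrow 0$ (as $v \propto x^\gamma$ with $\gamma \in (0,1)$ is Hölder but its derivatives are singular), so the localization must include an exit time from a neighborhood of the origin in the wealth coordinate, and uniform integrability is what rules out pathological limiting behavior near this boundary.
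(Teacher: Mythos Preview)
The paper does not actually prove this lemma: it cites Lemmas 4.1--4.2 of Benth (2005) and states that the $n$-dimensional extension is routine. Your proposal is precisely the classical verification argument that those lemmas contain, so the overall strategy is the right one.

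There is, however, one genuine weak point in the upper-bound direction. You propose to pass to the limit in $\mathbb{E}[v(T\wedge\tau_n, X(T\wedge\tau_n),\ldots)]$ by establishing uniform integrability of $v$ along an \emph{arbitrary} admissible trajectory, and you suggest this follows from admissibility conditions (a)--(d) together with the explicit form (\ref{4.1}). That claim is not justified: the hypothesis of the lemma supplies uniform integrability only along the \emph{optimal} trajectory $(\pi^{*},C^{*})$, and conditions (a)--(d) give no uniform-in-policy control on $\mathbb{E}[(X^{(\pi,C)}(\tau))^{\gamma(1+\epsilon)}]$. The standard repair---and the one Benth uses---is to observe that $v\geq 0$ (since $\gamma\in(0,1)$, $\psi>0$, $x\geq 0$, and $\varphi>0$ from (\ref{phijia})), so Fatou's lemma immediately gives
\[
\liminf_{n\to\infty}\mathbb{E}\bigl[v(T\wedge\tau_n, X(T\wedge\tau_n),S(T\wedge\tau_n),\psi(T\wedge\tau_n))\bigr]\;\geq\;\mathbb{E}\bigl[U(T,X(T))\bigr],
\]
which, combined with monotone convergence for the running-utility integral, yields $v\geq J(\pi,C)$ with no integrability hypothesis on the arbitrary policy at all. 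The uniform-integrability assumption is then used exactly once, in the reverse direction, to turn the Fatou inequality into an equality along $(\pi^{*},C^{*})$---just as you describe. As a minor aside: $v$ itself does not blow up as $x\downarrow 0$ (indeed $v\propto x^{\gamma}\to 0$); it is $\partial_x v$ and $D_{xS}^{2}v$ that are singular there, which is why localization is needed for the stochastic integral but not for the terminal value.
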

            The $1$-dimensional case of Lemma \ref{l5.1} is given in \cite[Lemmas 4.1-4.2]{Benth01072005}, and the extension to the $n$-dimensional case follows naturally. We omit the details here. 
        
		\begin{theorem}
			Under Assumptions \ref{a2.1}, \ref{a2.2}, \ref{a2.4}, for any $\tau \in [0,T]$, suppose the following conditions hold:
			\begin{align}
				\lambda_{\max} (g(\tau) \Sigma(\tau))<& \frac{1}{4(1-\gamma)} , \label{4.3} \\
                -\lambda_{\min} (\varrho \Sigma(\tau)) <& \frac{1}{8(1-\gamma)} , \label{4.4}                \\
				\lambda_{\max} (\Gamma \Sigma(\tau) )<& \frac{1}{8\gamma} ,\label{4.5}\\
				\lambda_{\max} (\Pi(\tau) \Sigma(\tau))<&\frac{1}{256\gamma^2},
                 \label{4.6}
			  \end{align}
			where 
            \begin{align*}
            \Gamma &=\operatorname{diag}(\alpha)(\sigma \sigma^{\top})^{-1}\operatorname{diag}(\alpha), 
            \\
            \Pi(\tau)&= 4g(\tau) \sigma \sigma^T g(\tau)+\frac{1}{1-\gamma} \operatorname{diag}(\alpha) \sigma \sigma^T \operatorname{diag}(\alpha),
             \end{align*}
             then the candidate solution $v$ given in (\ref{4.1}) is indeed the value function (\ref{o3}). Furthermore, suppose that for any $0 \leq t \leq u \leq T$, it also has
			\begin{align}
				\lambda_{\max}(\Gamma \Sigma(t))<&\min\left\{\frac{1}{8},\frac{(1-\gamma)^{2}}{\gamma^{2}}\right\},
                \label{4.7}\\
                \lambda_{\max} (\Pi(t) \Sigma(t))<&\frac{1}{256},
                \label{4.8}\\
				-\lambda_{\min} (g(t) \Sigma(t)) <& \frac{1}{4} ,
                \label{4.9} \\
				\lambda_{\max} ([(g(u)-g(t)]\Sigma(t)) < & \frac{1}{16}, 
                \label{4.10}
                \end{align}
			and the initial wealth $x$ is chosen such that 
			\begin{equation}
				x=\max\left\{\tilde{\mathbb{E}}\left[\int_{0}^{T}C(t)\exp\{-rt\}\,\mathrm{d}t\right],\tilde{\mathbb{E}}\left[X(T)\exp\{-rT\}\right] \right\},
                \label{4.13}
			\end{equation}
			then optimal portfolio-consumption policies are given by equations (\ref{3.221})-(\ref{3.222}).
		\end{theorem}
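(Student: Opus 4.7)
The overall strategy is the classical verification route through Lemma~\ref{l5.1}. Because $g, f, f_0$ were constructed as solutions of the ODE system (\ref{2.18a})-(\ref{2.18c}), which was itself derived from the HJB (\ref{2.5})-(\ref{2.51}) via the power transformation (\ref{2.8}), Feynman-Kac, and the quadratic ansatz (\ref{2.15}), direct substitution of (\ref{4.1}) into (\ref{2.5})-(\ref{2.51}) shows that the candidate $v$ is a classical $C^{1,2}$ solution of the HJB with the correct terminal condition. The only remaining hypothesis of Lemma~\ref{l5.1} is the uniform integrability of the family $\{v(\tau, X^{(\pi^*, C^*)}(\tau), S(\tau), \psi(\tau))\}_{\tau \in [0, T]}$, which I plan to obtain by proving a uniform $L^{1+\epsilon}$ bound for some $\epsilon > 0$.

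To establish that bound, substitute (\ref{4.1}), the definition (\ref{psi}) of $\psi$, and the ansatz (\ref{2.15}) of $\varphi_1$; then $|v|^{1+\epsilon}$ factors into three types of terms: (i) an exponential of the time-integrated quadratic form $\int_0^\tau S(u)^\top \varrho S(u)\,\mathrm{d}u$ coming from $\psi^{(1-\gamma)(1+\epsilon)}$; (ii) exponentials of pointwise quadratic forms $S(\tau)^\top g(u) S(\tau)$ for $u \in [\tau, T]$ coming from $\varphi^{(1-\gamma)(1+\epsilon)}$, after writing $\varphi = \varphi_1 + \int_\tau^T \varphi_1\,\mathrm{d}u$ and applying elementary convexity; and (iii) $X^*(\tau)^{\gamma(1+\epsilon)}$, which under the closed-loop feedback is a positive stochastic exponential whose logarithm, via Ito applied to $\ln \varphi_1(u, S(u))$, is again an explicit quadratic functional of the trajectory of $S$. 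I would then split the product (i)-(iii) by H\"older's inequality, recall from Lemma~\ref{l5.2} that each $S(\tau)$ is Gaussian with mean $\eta(\tau)$ and covariance $\Sigma(\tau)$, and apply Lemma~\ref{l5.3} to each factor: the finiteness of the relevant moment generating function reduces to an eigenvalue condition $\lambda_{\max}(R\Sigma(\tau)) < 1/(2s)$ (or its lower-bound analogue) for the appropriate matrix $R$ and exponent $s$. Conditions (\ref{4.3})-(\ref{4.6}) are exactly these bounds for $R = g(\tau), \varrho, \Gamma, \Pi(\tau)$; the constants $1/[4(1-\gamma)]$, $1/[8(1-\gamma)]$, $1/(8\gamma)$, $1/(256\gamma^2)$ arise from the H\"older exponents used to isolate each quadratic contribution. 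This yields a uniform $L^{1+\epsilon}$ bound, hence UI, and Lemma~\ref{l5.1} gives $v = V$.

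For the second assertion, once $v = V$ is known the optimal pair must pointwise maximize the Hamiltonian in (\ref{2.5}); by strict concavity in $(\pi, C)$ (from $\gamma \in (0,1)$) and the first-order conditions, the unique maximizer is the feedback pair (\ref{3.221})-(\ref{3.222}). It remains to verify admissibility per Definition~\ref{d2.5}. Condition (d) holds because the closed-loop $X^*$ is a positive stochastic exponential. Conditions (a)-(c) reduce once again to moment bounds on exponentials of quadratic forms in $S$, now integrated over the full horizon $[0, T]$, and the inequalities (\ref{4.7})-(\ref{4.10}) are tailored so that the same MGF analysis via Lemmas~\ref{l5.2} and~\ref{l5.3} delivers finiteness; in particular, applying Ito to $\ln \varphi_1(u, S(u))$ between times $t$ and $u$ introduces a quadratic form $S(t)^\top [g(u) - g(t)] S(t)$ into the moments of $X^*(t)^p$, which is precisely where (\ref{4.10}) enters. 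Condition (\ref{4.13}), together with Lemma~\ref{l5.4}, converts the budget constraint (d) into an equality, the natural tightness at the optimum. The main technical obstacle throughout is the bookkeeping: for each factor in $|v|^{1+\epsilon}$ and in the moments of $X^*$ one must identify the correct matrix $R$ and H\"older exponent so that the resulting eigenvalue bound reproduces exactly one of (\ref{4.3})-(\ref{4.10}) with its sharp constant; once this splitting is pinned down, Lemmas~\ref{l5.2} and~\ref{l5.3} close the argument routinely.
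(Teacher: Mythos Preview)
Your strategy matches the paper's: verification via Lemma~\ref{l5.1}, uniform integrability from an $L^{1+\epsilon}$ bound split by H\"older into $\varphi$, $\psi$, and $X^*$ factors, each handled by the Gaussian MGF Lemmas~\ref{l5.3}--\ref{l5.2}, and then admissibility of $(\pi^*,C^*)$ by the same machinery.

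One concrete bookkeeping correction is needed, precisely in the place you flag as the main obstacle. Condition~(\ref{4.10}) does \emph{not} enter through the moments of $X^*$, and the quadratic form $S(t)^\top[g(u)-g(t)]S(t)$ does not come from It\^o applied to $\ln\varphi_1(u,S(u))$ (that would produce $S(u)^\top g(u)S(u)-S(t)^\top g(t)S(t)$, a different object). In the paper the $X^*$ moments are controlled entirely by (\ref{4.5})--(\ref{4.6}) for the UI step and by (\ref{4.7})--(\ref{4.8}) for the bound $\mathbb{E}[(X^*)^4]<\infty$ used in admissibility, via the explicit solution (\ref{ss}). Condition~(\ref{4.10}) instead governs the $L^2$-integrability of $\pi^*$ (Definition~\ref{d2.5}(b)): since $\pi^*$ contains the factor $X^*\varphi^{-1}D_S\varphi$, one must estimate $\mathbb{E}[\varphi^{-4}\|D_S\varphi\|^4]$, and dividing $\varphi(t,S)=\varphi_1(t,S)+\int_t^T\varphi_1(u,S)\,du$ through by $\varphi_1(t,S)$ produces the ratio $\varphi_1(u,S)/\varphi_1(t,S)=\exp\{S^\top(g(u)-g(t))S+\text{lower order}\}$ inside the integrand---that is the purely algebraic origin of the $g(u)-g(t)$ term, leading to estimate~(\ref{4.17}). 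A smaller point: in the paper, (\ref{4.7}) is first used to verify the Novikov condition~(\ref{4.2}), which is what makes the measure $\tilde{\mathbb{P}}$ appearing in (\ref{4.13}) well defined and allows Lemma~\ref{l5.4} to deliver condition~(d); your positive-stochastic-exponential argument for (d) is valid, but it leaves unexplained why (\ref{4.7}) and (\ref{4.13}) are in the hypotheses at all.
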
		
		\begin{proof}
			\par From the statement of Lemma \ref{l5.1}, to establish the optimality of the candidate solution $v$, it suffices to prove that $\left\{v\left(\tau,X(\tau)^{(\pi^{*},C^{*})},S(\tau),\psi(\tau)\right)\right\}_{\tau \in [0,T]}$ is uniformly integrable. It is notable that for any given $\epsilon>0$ and stopping time $\tau \in [0,T]$, by equation (\ref{4.1}) and Holder's inequality, we have
			\begin{align}
				&\,\mathbb{E}^{(t,x,S,\psi)}\left[v^{1+\epsilon}(\tau,X^{*}(\tau),S(\tau),\psi(\tau))\right] \notag\\
				=&\,\gamma^{-(1+\epsilon)}\mathbb{E}^{(t,x,S,\psi)}\left[\left(\sum_{i=1}^2\varphi_{i}(\tau,S(\tau))\right)^{(1-\gamma)(1+\epsilon)}\psi(\tau)^{(1-\gamma)(1+\epsilon)}(X^{*}(\tau))^{\gamma(1+\epsilon)}\right] \notag \\
				\leq&\, c\mathbb{E}^{(t,x,S,\psi)}\left[\left(\sum_{i=1}^2\varphi_{i}^{(1-\gamma)(1+\epsilon)}(\tau,S(\tau))\right)\psi(\tau)^{(1-\gamma)(1+\epsilon)}(X^{*}(\tau))^{\gamma(1+\epsilon)}\right] \notag \\
				=&\, c\sum_{i=1}^2 \mathbb{E}^{(t,x,S,\psi)}\left[\varphi_{i}^{(1-\gamma)(1+\epsilon)}(\tau,S(\tau))\psi(\tau)^{(1-\gamma)(1+\epsilon)}(X^{*}(\tau))^{\gamma(1+\epsilon)}\right] \notag \\
                \leq&\, c\sum_{i=1}^2\left\{\mathbb{E}^{(t,x,S,\psi)}\left[\varphi_{i}^{2(1-\gamma)(1+\epsilon)}(\tau,S(\tau))\right]\right\}^{\frac{1}{2}}\left\{\mathbb{E}^{(t,x,S,\psi)}\left[\psi(\tau)^{4(1-\gamma)(1+\epsilon)}\right]\right\}^{\frac{1}{4}} \notag \\
                &\,\times \left\{\mathbb{E}^{(t,x,S,\psi)}\left[(X^{*}(\tau))^{4\gamma(1+\epsilon)}\right]\right\}^{\frac{1}{4}} . \label{fs1}
			\end{align}
			 Therefore, to demonstrate the integrability, we only need to prove that for any $\tau\in[t,T] $, the right-hand side of above equation is bounded.
             \par We first handle the first expectation term in the right-hand side of (\ref{fs1}), by equations (\ref{3phi}), (\ref{2.15}) and Lemma \ref{l3.3}, we obtain
				\begin{align*}					&\mathbb{E}^{(t,x,S,\psi)}\left[\varphi_{1}^{2(1-\gamma)(1+\epsilon)}(\tau,S(\tau))\right]\\
					=&\mathbb{E}^{(t,x,S,\psi)}[\exp\{2(1-\gamma)(1+\epsilon)(f_{0}(\tau)+f(\tau)^{\top}S(\tau)+S(\tau)^{\top}g(\tau)S(\tau))\}]\\
					\leq& c\mathbb{E}^{(t,x,S,\psi)}[\exp\{2(1-\gamma)(1+\epsilon)(f(\tau)^{\top}S(\tau)+S(\tau)^{\top}g(\tau)S(\tau))\}].
				\end{align*}
				 Furthermore, by Young's inequality \cite{Benth01072005}, it has
				\begin{equation*}
					2(1-\gamma)(1+\epsilon)f^{\top}(\tau)S(\tau) \leq c+S(\tau)^{\top}\delta S(\tau),\quad \forall  \tau \in[t,T],
				\end{equation*}
				where $\delta$ denotes an arbitrary positive definite diagonal matrix, and $c$ is determined by the upper bound of $f_0(\tau),f_0(\tau)$ and $\delta$. Hereafter, each occurrence of $\delta$ corresponds to a new application of the matrix form of Young's inequality. Based on the discussion above, we have
				\begin{align}				&\mathbb{E}^{(t,x,S,\psi)}\left[\varphi_{1}^{2(1-\gamma)(1+\epsilon)}(\tau,S(\tau))\right] \notag \\
                \leq& c\mathbb{E}^{(t,x,S,\psi)}[\exp\{S(\tau)^{\top}[2(1-\gamma)(1+\epsilon)g(\tau)+\delta]S(\tau)\}].
                \label{4.14}
				\end{align}
				Let us set 
                \begin{equation*}
                    s=2,\quad R=2(1-\gamma)(1+\epsilon)g(\tau)+\delta.
                \end{equation*}
                By Lemma \ref{l5.3}-\ref{l5.2}, given the arbitrariness of $\epsilon$ and matrix $\delta$, right-hand side of estimate (\ref{4.14}) is bounded when condition (\ref{4.3}) is satisfied. Moreover, by equation (\ref{3phi}), the first expectation term in the right-hand side of (\ref{fs1}) is also bounded under the same condition.
				\par Next we proceed to prove that the second expectation term of the right-hand side of (\ref{fs1}) is bounded. Review equation (\ref{psi}) and by Young's inequality, we obtain
				\begin{align}					&\mathbb{E}^{(t,x,S,\psi)}\left[\psi(\tau)^{4(1-\gamma)(1+\epsilon)}\right] \notag\\
					=&\mathbb{E}^{(t,x,S,\psi)}\left[\exp \left\{-4(1-\gamma)(1+\epsilon)\int_{0}^{\tau}[\rho_{0}+S(u)^{\top}\rho+S(u)^{\top}\varrho S(u)] \,\mathrm{d}u\right\}\right] \notag \\
					\leq& c\int_{0}^{\tau}\mathbb{E}^{(t,x,S,\psi)}\left[\exp \left\{S(u)^{\top}[\delta-4(1-\gamma)(1+\epsilon)\varrho]S(u) \right\}\right] \,\mathrm{d}u.
					\label{4.15}
				\end{align}
				 Similarly, by Lemma \ref{l5.3}-\ref{l5.2} and the arbitrariness of $\epsilon$ and $\delta$, the expectation on the right-hand side of estimate (\ref{4.15}) is bounded if condition (\ref{4.4}) is guaranteed.
			\par Finally, let us prove that the third expectation term of the right-hand side of (\ref{fs1}) is bounded. We denote the wealth when consumption is absent ($C\equiv0$) by $Z(t)$ and the corresponding optimal wealth by $Z^{*}(t)$. Obviously, we have that for any $t\in [0,T],X^{*}(t)\leq Z^{*}(t)$. Thus, we only need to prove $\mathbb{E}^{(t,x,S,\psi)}\left[(Z(\tau))^{4\gamma(1+\epsilon)}\right]<+\infty$. Following the proof of Theorem 4.4 of \cite{Benth01072005}, by equation (\ref{ss}), we can readily derive 
				\begin{align}					   &\mathbb{E}^{(t,x,S,\psi)}\left[(Z(\tau))^{4\gamma(1+\epsilon)}\right] \notag\\ 
				\leq&c\int_{t}^{\tau}\mathbb{E}^{(t,x,S,\psi)}\left[\exp \left\{8\gamma(1+\epsilon)S(u)^{\top}(\delta+\frac{1}{2}\operatorname{diag}(\alpha)(\sigma \sigma^{\top})^{-1}\operatorname{diag}(\alpha))S(u)\right\}\right]\,\mathrm{d}u \notag\\
			    	& \times \int_{t}^{\tau} \mathbb{E}^{(t,x,S,\psi)} \bigg[ \exp \bigg\{ 128 \gamma^2 (1+\epsilon)^2 S(u)^T  [\delta + 4g(t) \sigma \sigma^T g(t)\notag\\
                    & \hspace{9em}+ \frac{1}{1-\gamma} \operatorname{diag}(\alpha) \sigma \sigma^T \operatorname{diag}(\alpha)] S(u)  \bigg\} \bigg]^{\frac{1}{4}}\,\mathrm{d}u.
                    \label{4.16}
				\end{align}
				 By Lemma \ref{l5.3}-\ref{l5.2} and the arbitrariness of $\epsilon$ and matrix $\delta$, right-hand integral of estimate (\ref{4.16}) is bounded by conditions (\ref{4.5})-(\ref{4.6}). Consequently, the boundedness of the third expectation in (\ref{fs1}) is assured by these same conditions. So far, we have proved that the candidate solution $v$ is the value function $V$ if conditions (\ref{4.3})-(\ref{4.6}) are satisfied.
			\par Now let us prove that conditions (\ref{4.7})-(\ref{4.10}) guarantee $(\pi^{*},C^{*})$ to be admissible. We have proved that for $\left\{v\left(\tau,X^{(\pi^{*},C^{*})}(\tau),S(\tau),\psi(\tau)\right)\right\}_{\tau}$ is uniformly integrable under conditions (\ref{4.3})-(\ref{4.6}). Therefore, condition $(c)$ in Definition \ref{d2.5} is also guaranteed. Next we will prove rest of the admissibility conditions. 
            
                Firstly, to ensure the feasibility of the change of measure in Section \ref{sec3.2} and Lemma \ref{l5.4}, it's sufficient to verify that condition (\ref{4.2}) is satisfied. By Young's and Jensen's inequalities, we derive 
               \begin{align*}
                   &\mathbb{E}\left[\exp\left\{\max\left\{\frac{1}{2},\frac{\gamma^{2}}{2(1-\gamma)^{2}}\right\}\int_{0}^{T}\|\sigma^{-1}\operatorname{diag}(\alpha)(\mu-S(u))\|^{2}\,\mathrm{d}u\right\}\right]\\
                    \leq& c\int_{0}^{T} \mathbb{E}\left[\exp\left\{\max\left\{\frac{1}{2},\frac{\gamma^{2}}{2(1-\gamma)^{2}}\right\}S(u)^{\top}\left[\delta+\operatorname{diag}(\alpha)(\sigma\sigma^{\top})^{-1}\operatorname{diag}(\alpha)\right]S(u)\right\}\right]\,\mathrm{d}u.
                \end{align*}
				 Then, by Lemmas \ref{l5.3}-\ref{l5.2} and the arbitrariness of $\delta$, we can conclude that condition (\ref{4.7}) ensures condition (\ref{4.2}). Consequently, with $x$ defined in equation (\ref{4.13}), condition (d) in Definition \ref{d2.5} is fulfilled, according to Lemma \ref{l5.4}.	
                 
			 Then let us verify that $C^{*}$ satisfies condition (a) in Definition \ref{d2.5}. From the analysis in estimate (\ref{4.16}), we conclude that  $\mathbb{E}[X^{*}(t)]^{4}<+\infty$ holds for any $t\in[0,T]$ under conditions (\ref{4.7})-(\ref{4.8}), which helps us to prove the $L^{1}$ integrability of $C^{*}$. By applying equations (\ref{3.222}) and (\ref{2.15}), together with Cauchy's inequality and Young's inequality, we obtain 
                \begin{align}
                &\mathbb{E}\left[\int_{0}^{T}C^{*}(t)\,\mathrm{d}t\right] \notag \\
                     \leq&\int_{0}^{T}\mathbb{E}\left[\varphi^{-1}(t,S(t))|X^{*}(t)|\right]\,\mathrm{d}t \notag\\
					\leq& \int_{0}^{T}\left\{\mathbb{E}\left[\varphi^{-2}(t,S(t))\right]\right\}^{\frac{1}{2}}\left\{\mathbb{E}\left[(X^{*}(t))^{2}\right]\right\}^{\frac{1}{2}}\,\mathrm{d}t \notag\\
					\leq& c\int_{0}^{T}\{\mathbb{E}[\exp\{-2(f_{0}(t)+S(t)^{\top}f(t)+S(t)^{\top}g(t)S(t))\}]\}^{\frac{1}{2}}\,\mathrm{d}t \notag \\
                    \leq& c \int_{0}^{T}\{\mathbb{E}[\exp\{S(t)^{\top}[\delta-2g(t)]S(t)\}]\}^{\frac{1}{2}}\,\mathrm{d}t.
                    \label{123}
				\end{align} 
                 In the same way, by Lemma \ref{l5.3}-\ref{l5.2} and arbitrariness of $\delta$, if condition (\ref{4.9}) is satisfied, the right-hand side of estimate (\ref{123}) is bounded, which directly guarantees the $L^{1}$ integrability of $C^{*}$.
				\par  At last, to establish the $L^{1}$ integrability of $\pi^{*}$, by equations (\ref{3.221}) and (\ref{2.15}), Lemma \ref{l3.3}, Holder's inequality and Jensen's inequality, we obtain 
				\begin{align}
					&\mathbb{E}\left[\int_{0}^{T}\|\pi^{*}(t)\|^{2}\,\mathrm{d}t\right] \notag\\
					\leq& c+2c\int_{0}^{T}\mathbb{E}\left[\varphi^{-2}(t,S(t))\left\|D_{S}\varphi(t,S(t))\right\|^{2}(X^{*}(t))^{2}\right]\,\mathrm{d}t \notag\\
					\leq& c+2c\left\{\int_{0}^{T}\mathbb{E}\left[\varphi^{-4}(t,S(t))\|D_{S}\varphi(t,S(t))\|^{4}\right]\,\mathrm{d}t\right\}^\frac{1}{2} \left\{\int_{0}^{T}\mathbb{E}\left[(X^{*}(t))^{4}\right]\,\mathrm{d}t\right\}^\frac{1}{2} \notag\\
					\leq&  c+c\Bigg\{\int_{0}^{T}\mathbb{E}\bigg[\Big\|\int_{t}^{T}\exp\left\{f_{0}(u)+S(t)^{\top}(f(u)-f(t))+S(t)^{\top}(g(u)-g(t))S(t)\right\}\notag\\
                    &\hspace{7em}
                    \times (f(u)+2g(u)S(t))\,\mathrm{d}u  +f(t)+2g(t)S(t)\Big\|^{4}\bigg]\,\mathrm{d}t\Bigg\}^\frac{1}{2}\notag  \\
					\leq& c+c\left\{\int_{0}^{T}\int_{t}^{T}\left\{\mathbb{E}\left[\exp\left\{S(t)^{\top}[8(g(u)-g(t))+\delta]S(t)\right\}\right]\right\}^{\frac{1}{2}}\,\mathrm{d}u\,\mathrm{d}t\right\}^\frac{1}{2}.
					\label{4.17}
				\end{align}
				 By Lemma \ref{l5.3}-\ref{l5.2}, condition (\ref{4.10}) guarantees the boundedness of the right-hand side of estimate (\ref{4.17}), and therefore, guarantees the integrability of $\pi^{*}$. 
                 \par By Pontryagin's Maximum Principle, we know that $(\pi^{*},C^{*})$ maximizes the Hamiltonian $\mathcal{G}_{xS \psi}$. Therefore, it's the optimal policy. The proof of the theorem is completed. 
		\end{proof}		
		\begin{corollary}
			Under the assumptions of Theorem \ref{t4.1}, let $\hat{v}(t,x,S,\psi)$ denotes the numerical approximation of $v(t,x,S,\psi)$ using ExpEuler-RK2, Errow3-RK3 methods, if the discretization step $h$ is small enough and numerical solution $\hat{g}(t)$ satisfies conditions (\ref{4.3})-(\ref{4.10}), then $\hat{v}(t,x,S,\psi)$ provides a good approximation for the value function $V(t,x,S,\psi)$. Moreover, the optimal policy can be approximated by computing the feedback control given in (\ref{3.221})-(\ref{3.222}) using the numerical solution $\hat{\varphi}(t,S)$.
		\end{corollary}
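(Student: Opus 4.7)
The plan is to split the argument into two parts: first, an analytic error-propagation step that transfers the ODE-level convergence of Theorem~\ref{t4.1} to the value-function-level approximation $\hat v \to v$; second, an admissibility/optimality step that uses the verification theorem just proved to identify $v$ with $V$ and to legitimize the feedback control built from $\hat\varphi$.

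For the first part, I would start from the explicit formula~(\ref{4.1}) and write
\begin{equation*}
\hat v(t,x,S,\psi)-v(t,x,S,\psi)=\gamma^{-1}\psi^{1-\gamma}x^{\gamma}\bigl[(\hat\varphi_1+\hat\varphi_2)^{1-\gamma}-(\varphi_1+\varphi_2)^{1-\gamma}\bigr],
\end{equation*}
where $\hat\varphi_1(t,S)=\exp\{S^\top\hat g(t)S+S^\top \hat f(t)+\hat f_0(t)\}$ and $\hat\varphi_2(t,S)=\int_t^T\hat\varphi_1(u,S)\,\mathrm{d}u$ via the discrete analogue of~(\ref{3phi}). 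Since Theorem~\ref{t4.1} gives $\|g(t_k)-\hat g_k\|,\|f(t_k)-\hat f_k\|,|f_0(t_k)-\hat f_{0,k}|\le c h^{p}$ for $p=2,3$, the mean value theorem applied to $\exp$ (using that $g,f,f_0$ and, for $h$ small, also $\hat g,\hat f,\hat f_0$ remain uniformly bounded by Lemma~\ref{l3.3}) gives a pointwise bound $|\hat\varphi_1-\varphi_1|\le c(S)h^{p}$ with $c(S)$ polynomially weighted by $\|S\|^{2}$ inside an exponential. A trapezoidal-rule quadrature applied to~(\ref{3phi}) then yields the same order bound for $\hat\varphi_2-\varphi_2$. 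Finally, the map $y\mapsto y^{1-\gamma}$ is locally Lipschitz on positive arguments (both $\varphi$ and $\hat\varphi$ are positive by construction), so $|\hat v-v|\le C(t,x,S,\psi)h^{p}$ on any compact set of state variables.

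For the second part, the hypothesis that the numerical solution $\hat g$ satisfies (\ref{4.3})--(\ref{4.10}) is exactly what the verification theorem requires in order to identify the candidate with the value function and to ensure admissibility of the feedback policy. By continuity of the eigenvalues $\lambda_{\max},\lambda_{\min}$ in the matrix entries and the strict inequalities in (\ref{4.3})--(\ref{4.10}), a sufficiently small $h$ guarantees that the exact $g$ also satisfies the same strict inequalities (possibly with slightly enlarged constants); hence Theorem~\ref{t4.1}'s verification result applies to $v$ and gives $v=V$. Combining with the first part,
\begin{equation*}
|\hat v(t,x,S,\psi)-V(t,x,S,\psi)|\le C(t,x,S,\psi)h^{p},\qquad p=2,3.
\end{equation*}

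For the feedback control, insertion of $\hat\varphi$ into (\ref{3.221})--(\ref{3.222}) gives $\hat\pi^{*},\hat C^{*}$. Because $\varphi^{-1}$ and $D_S\varphi\,\varphi^{-1}$ are smooth functions of $(\hat g,\hat f,\hat f_0)$ (with $D_S\hat\varphi_1=(2\hat g(t)S+\hat f(t))\hat\varphi_1$ and a corresponding integral expression for $D_S\hat\varphi_2$), the ODE error bounds propagate to give $\|\hat\pi^{*}-\pi^{*}\|+|\hat C^{*}-C^{*}|\le C(t,X^{*}(t),S(t))h^{p}$. The main obstacle I anticipate is the pathwise control of these constants: the constants $C(t,x,S,\psi)$ involve expressions like $\exp\{S^\top\hat g S\}$, so to turn the deterministic local bound into a bound on the expected utility $J(\hat\pi^{*},\hat C^{*})$ one must invoke Lemma~\ref{l5.3} and Lemma~\ref{l5.2} once more, verifying that the eigenvalue conditions continue to hold with $\hat g$ in place of $g$ --- which is precisely the assumption that (\ref{4.3})--(\ref{4.10}) hold numerically. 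Hence the approximation quality of the policy and of the attained utility both reduce to the $h^{p}$-order error on $g,f,f_0$ established in Theorem~\ref{t4.1}.
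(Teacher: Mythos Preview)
Your proposal is correct and follows essentially the same approach as the paper: propagate the ODE-level $h^{p}$ errors of Theorem~\ref{t4.1} through the explicit formula~(\ref{4.1}) to bound $|\hat v-v|$ on compact sets, then use continuity of $\lambda_{\max},\lambda_{\min}$ to transfer the strict inequalities (\ref{4.3})--(\ref{4.10}) from $\hat g$ to $g$ so that the verification theorem yields $v=V$, and finally push the same error bounds through (\ref{3.221})--(\ref{3.222}) for the policy. The paper's proof is more qualitative (stated in $\varepsilon$-on-compacta form rather than $h^{p}$ rates) and stops at the pointwise policy approximation on compact sets, whereas your final paragraph about controlling $J(\hat\pi^{*},\hat C^{*})$ via Lemmas~\ref{l5.3}--\ref{l5.2} goes beyond what the corollary actually claims; it is not needed here, though it is a reasonable remark.
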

		\begin{proof}
			Combining Remark \ref{rem:relation} with Theorem \ref{t4.1}, for any given compact set $\mathcal{K}_1 \subset [0,T]\times \mathbb{R}^n$ and any $\varepsilon>0$, there exists a sufficiently small discretization step $h$, such that
			\begin{align*}
				\|g(t)-\hat{g}(t)\| &\leq \varepsilon, \hspace{2em} \forall t \in [0,T],\\
				|\varphi(t,S)-\hat{\varphi}(t,S)| &\leq \varepsilon, \hspace{2em} \forall(t,S) \in \mathcal{K}_1.
			\end{align*}
			When $\hat{g}(t)$ satisfies the verification conditions, it's reasonable to conclude that $g(t)$ also satisfies them, given the continuity of the functions $\lambda_{\max}(\cdot)$ and $\lambda_{\min}(\cdot)$. Hence, our numerical methods offer a reliable approximation of $v$, and therefore, of the value function $V$. 
			Moreover, we note that both $\varphi$ and $\hat{\varphi}$ exhibit smoothness, then we can compute the gradient $D_{S}\hat{\varphi}(t,S)$ to approximate $D_{S}{\varphi}(t,S)$. By formulas (\ref{3.221})-(\ref{3.222}), for any given compact set $\mathcal{K}_2 \subset [0,T] \times \mathbb{R} \times \mathbb{R}^n \times \mathbb{R}_+$ and any $\varepsilon>0$, there also exists a sufficiently small discretization step $h$, such that
			\begin{align*}
				\|\pi^{*}(t,x,S,\psi)-\hat{\pi}^{*}(t,x,S,\psi)\|&\leq \varepsilon, \hspace{2em} \forall(t,x,S,\psi)\in \mathcal{K}_2,\\
				|C^{*}(t,x,S,\psi)-\hat{C}^{*}(t,x,S,\psi)|&\leq \varepsilon, \hspace{2em} \forall(t,x,S,\psi)\in \mathcal{K}_2,
			\end{align*}
			where $(\hat{\pi}^{*},\hat{C}^{*})$ denotes the numerical optimal policy. Hence, it is reasonable to establish that $(\hat{\pi}^{*},\hat{C}^{*})$ is a good approximation of optimal policy. 	
		\end{proof}
        
\section{Numerical Experiments}	\label{sec6}
In this section, we present several numerical experiments to assess the performance of ExpEuler-RK2 and Erow3-RK3.
First, we examine the accuracy of these two methods in solving the ODE system (\ref{2.18a})-(\ref{2.18c}). We then test the convergence order and computational efficiency of ExpEuler-RK2 and Erow3-RK3, verifying the results presented by Theorem \ref{t4.1}. Additionally, we conduct a comparison with a conventional grid-based method to demonstrate the superiority of Erow3-RK3. Finally, we apply Erow3-RK3 to a real-world market case to showcase its practical application.
All experiments are performed under Windows 11 and Matlab R2024b running on a laptop with 13th Gen Intel Core i7-13650HX processor with 2.60 GHz and RAM 16 GB.

\subsection{Accuracy and Efficiency}
\label{sec6.1}
To evaluate ExpEuler-RK2 and Erow3-RK3, we consider the special case in Remark \ref{sc}, which admits closed-form solutions. Let us set time horizon $t\in[0,1]$ and the coefficient settings are as follows:
\begin{align*}
&r=0.5,\quad \gamma=0.5,\quad	 \rho_{0}=0, \quad  \rho=\mathrm{zeros}(n,1), \quad \alpha=0.3 + 0.4 \times \mathrm{rand}(n, 1), \\
  & \mu=5+3\times \mathrm{rand}(n,1) \quad\sigma=\mathrm{orth}(0.01\times \mathrm{rand}(n,n)), \quad \varrho=\mathrm{zeros}(n,n).
\end{align*}

\begin{figure}[h]
	\centering
	\begin{subfigure}[b]{0.455\textwidth} 
		\centering
		\includegraphics[width=\textwidth]{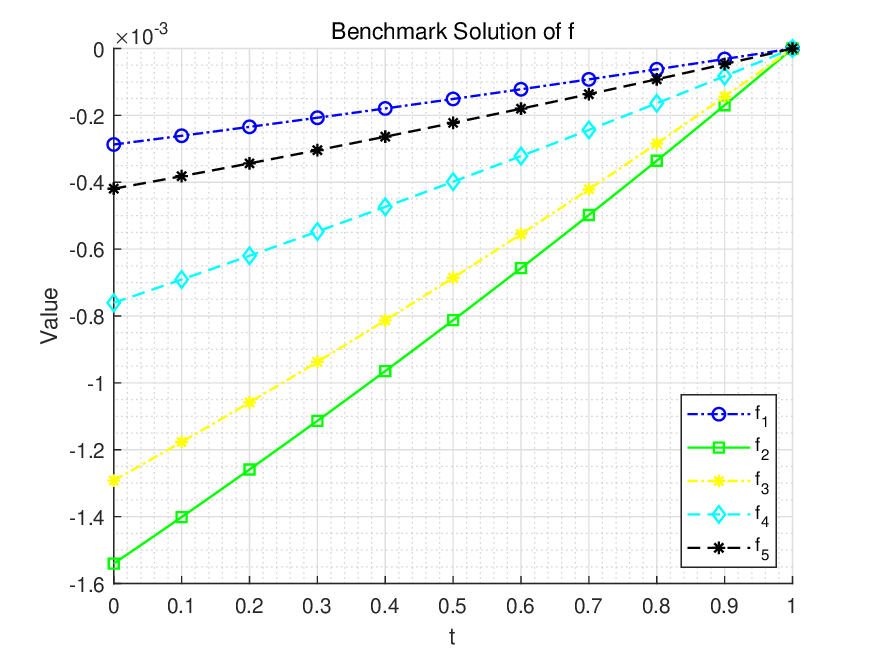} 
            \caption{}
            \label{f2a}
	\end{subfigure}
	\quad
	\begin{subfigure}[b]{0.455\textwidth} 
		\centering
		\includegraphics[width=\textwidth]{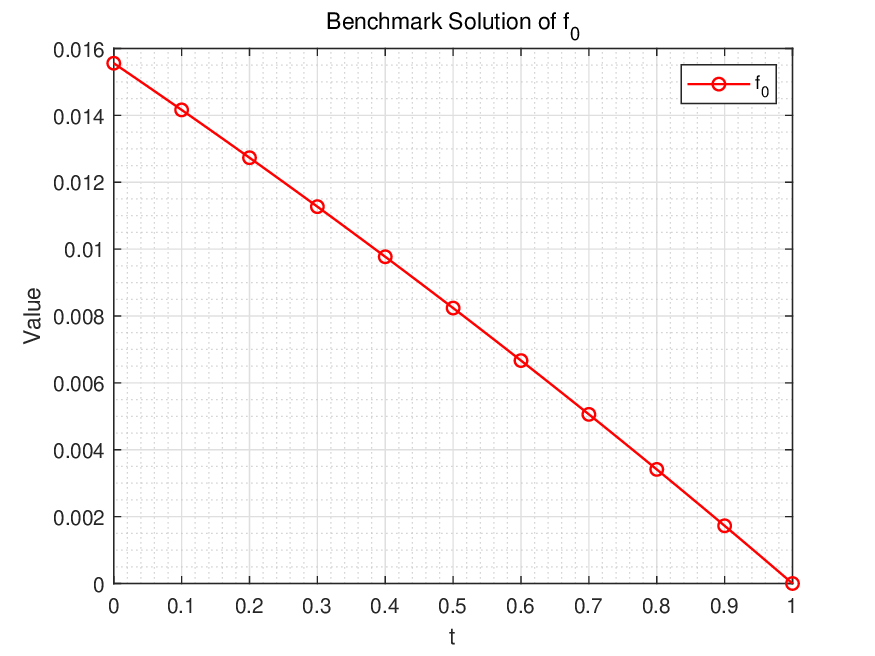} 
            \caption{}
            \label{f2b}
	\end{subfigure}
    \begin{subfigure}[b]{0.455\textwidth} 
		\centering
		\includegraphics[width=\textwidth]{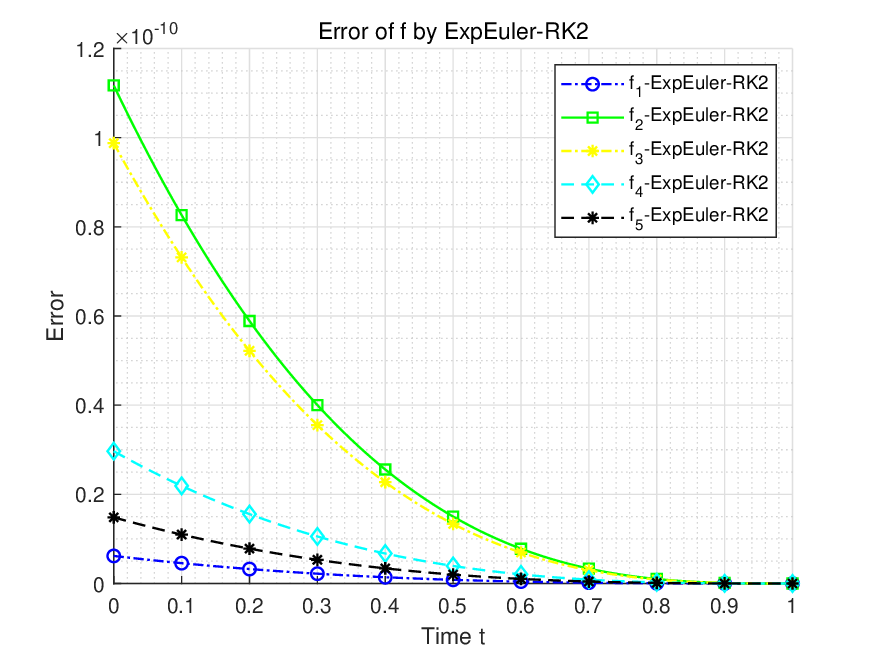} 
            \caption{}
            \label{f2c}
	\end{subfigure}
	\quad
	\begin{subfigure}[b]{0.455\textwidth} 
		\centering	
		\includegraphics[width=\textwidth]{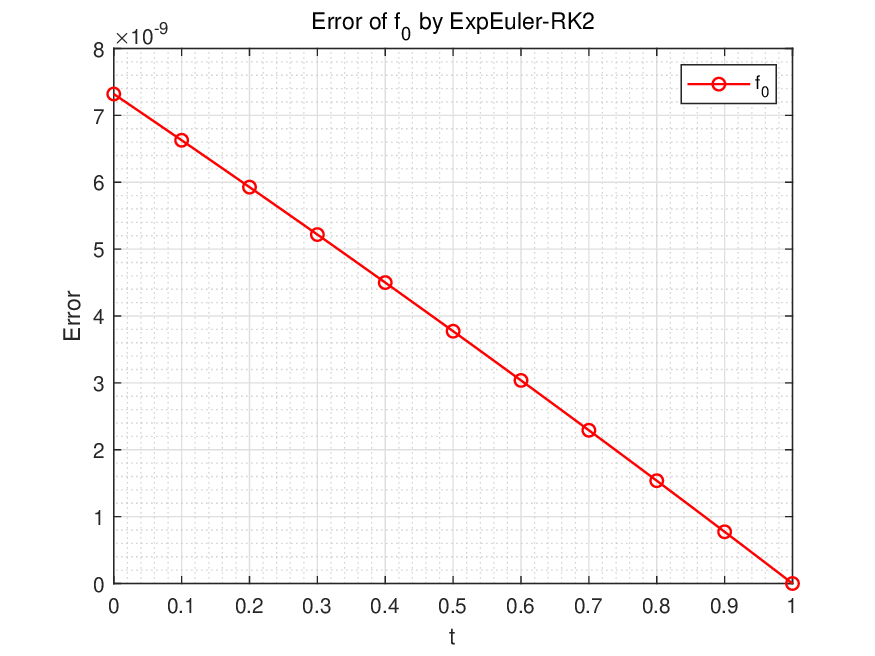} 
            \caption{}
            \label{f2d}
\end{subfigure}
\begin{subfigure}[b]{0.455\textwidth} 
		\centering
		\includegraphics[width=\textwidth]{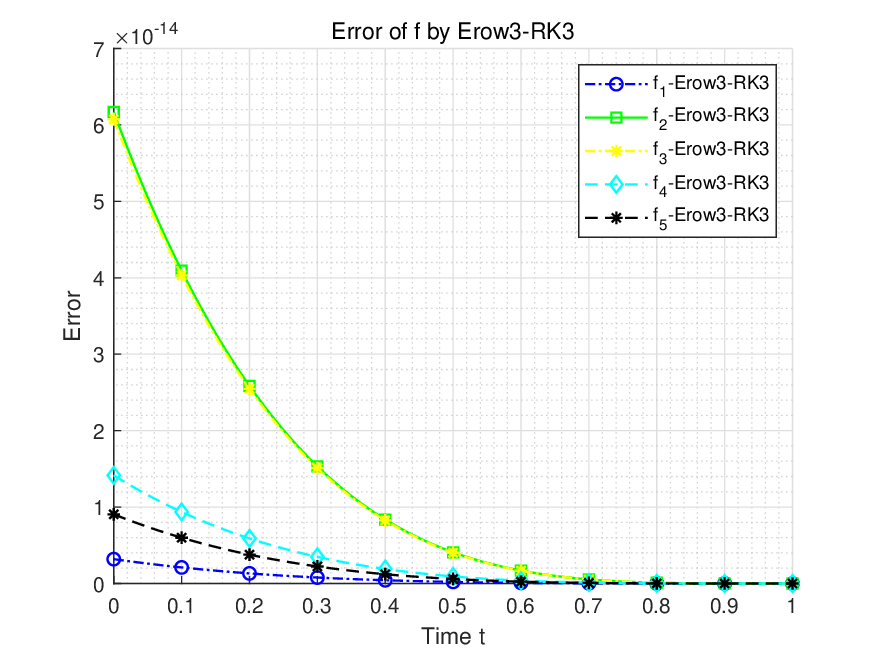}
		\caption{}
        \label{f2e}
	\end{subfigure}
	\quad
	\begin{subfigure}[b]{0.455\textwidth} 
		\centering	
		\includegraphics[width=\textwidth]{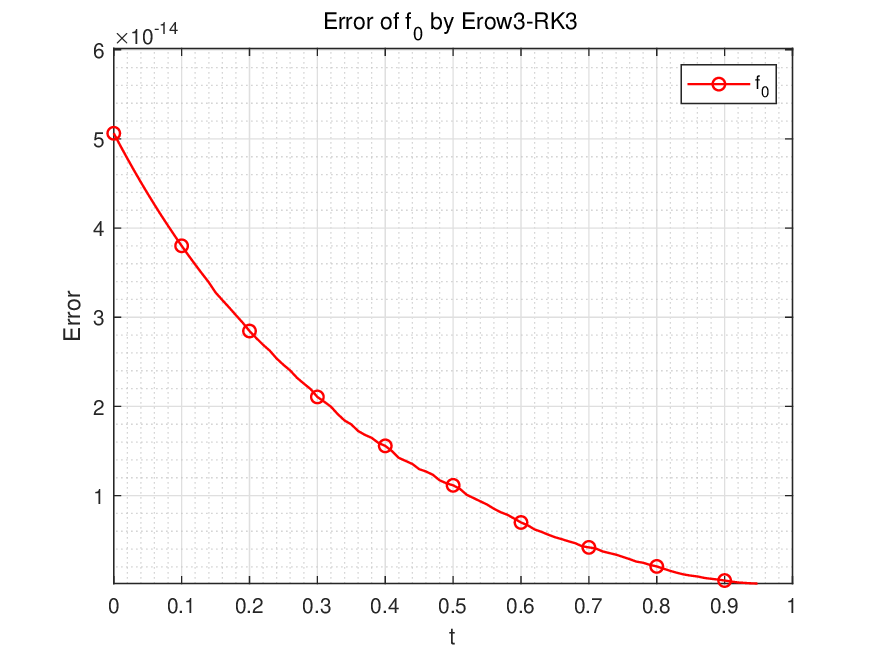}
            \caption{}
            \label{f2f}
	\end{subfigure}
\caption{Benchmark solutions and Numerical Errors}
\label{f2}
\end{figure}

Closed-form solutions of (\ref{2.18a})-(\ref{2.18c}) are given in equations (\ref{3.1a})-(\ref{3.1c}), we divide the interval $[0,1]$ into 100 uniform subintervals (i.e., with a step size of $h = 0.01$), and apply the Composite Simpson’s numerical integration rule to compute benchmark solutions. These benchmarks are then used to examine the accuracy and convergence order of our methods. Since ExpEuler and Erow3 are already proved to be great solvers for matrix Riccati equations like (\ref{2.18a}) in \cite{LI2021113360}, we will only evaluate the performance of our methods on $f(t)$ and $f_{0}(t)$. We first evaluate the accuracy of ExpEuler-RK2 and Erow3-RK3 in the case of $n=5$.

In Figure \ref{f2}, panel (\ref{f2a})-(\ref{f2b}) present the benchmark solution for each entry of $f(t)$ and $f_{0}(t)$. Panels (\ref{f2c})-(\ref{f2d}) and (\ref{f2e})-(\ref{f2f}) illustrate the numerical errors obtained using ExpEuler-RK2, Erow3-RK3 for solving $f(t)$ and $f_0(t)$, respectively. The figure demonstrates that both schemes solve the ODE system with high accuracy. And Errow3-RK3 delivers markedly superior precision compared with ExpEuler-RK2. Moreover, we observe that the numerical error decreases as time progresses, which is attributed to the terminal nature of the problem.

In addition, we test convergence order of these two algorithms in terms of solving the ODE system. We set $n=10$ and select different step sizes: $h=2^{-k},k=3,\dots,7$. The relative errors are measured in the Frobenius norm for matrix and vector functions. In Figure \ref{f3}, panels (\ref{test1}) and (\ref{test3}) show the logarithmic values of the numerical errors for ExpEuler-RK2 and Erow3-RK3 against the degree of freedom, which shows that convergence orders are precisely as established in Theorem \ref{t4.1}. And panels (\ref{test2}) and (\ref{test4}) illustrate the error against computation time (seconds), which shows that Erow3-RK3 outperforms ExpEuler-RK2 in terms of efficiency. Therefore, we use Erow3-RK3 for the remaining experiments.

\begin{figure}[htbp]
	\centering
	\begin{subfigure}[b]{0.46\textwidth} 
	\centering
	\includegraphics[width=\textwidth]{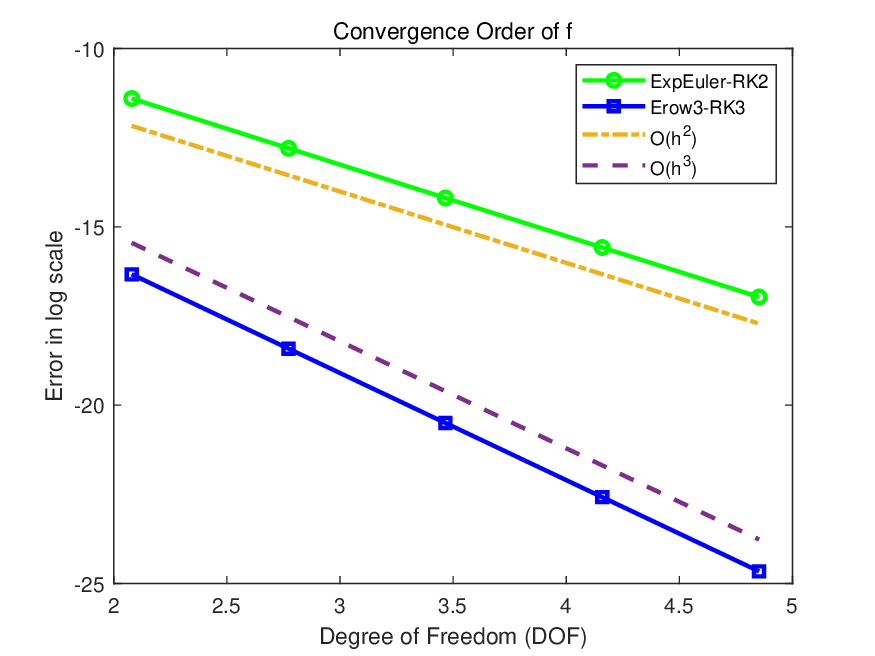} 
	\caption{}
    \label{test1}
\end{subfigure}
\quad
\begin{subfigure}[b]{0.46\textwidth} 
	\centering 
	\includegraphics[width=\textwidth]{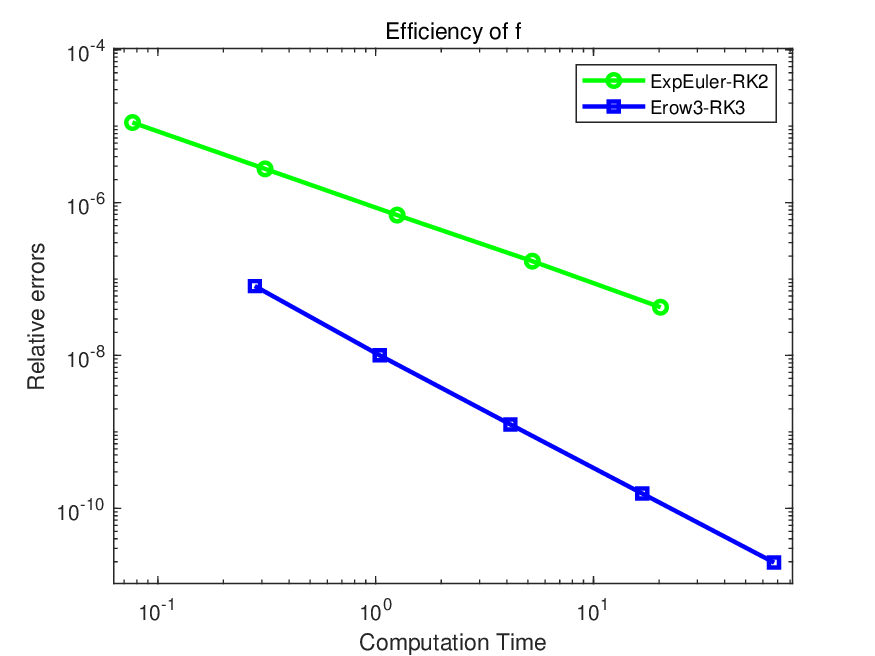} 
	\caption{}
    \label{test2}
\end{subfigure}
\begin{subfigure}[b]{0.46\textwidth} 
\centering
\includegraphics[width=\textwidth]{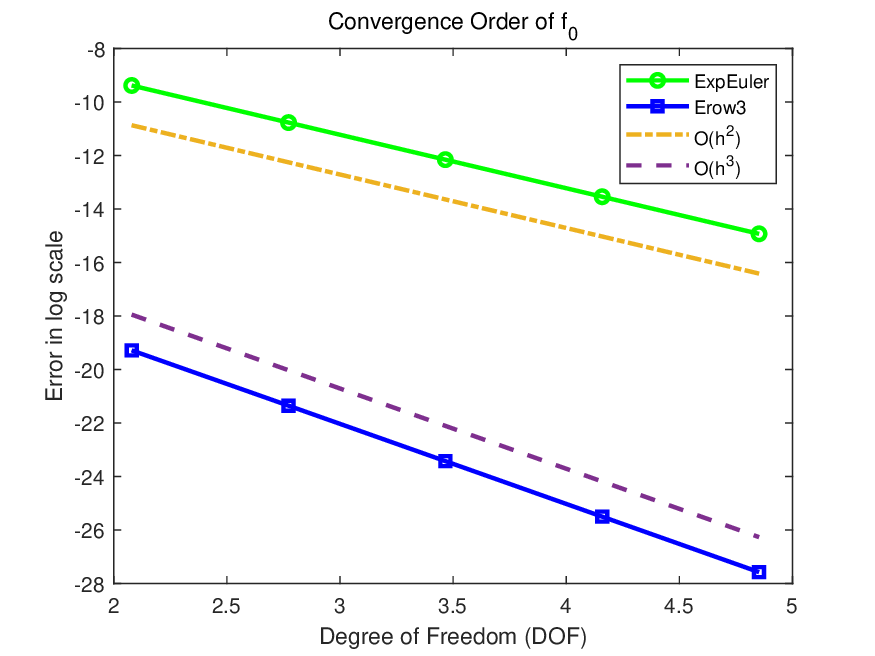} 
\caption{}
\label{test3}
\end{subfigure}
\quad
\begin{subfigure}[b]{0.46\textwidth} 
\centering
\includegraphics[width=\textwidth]{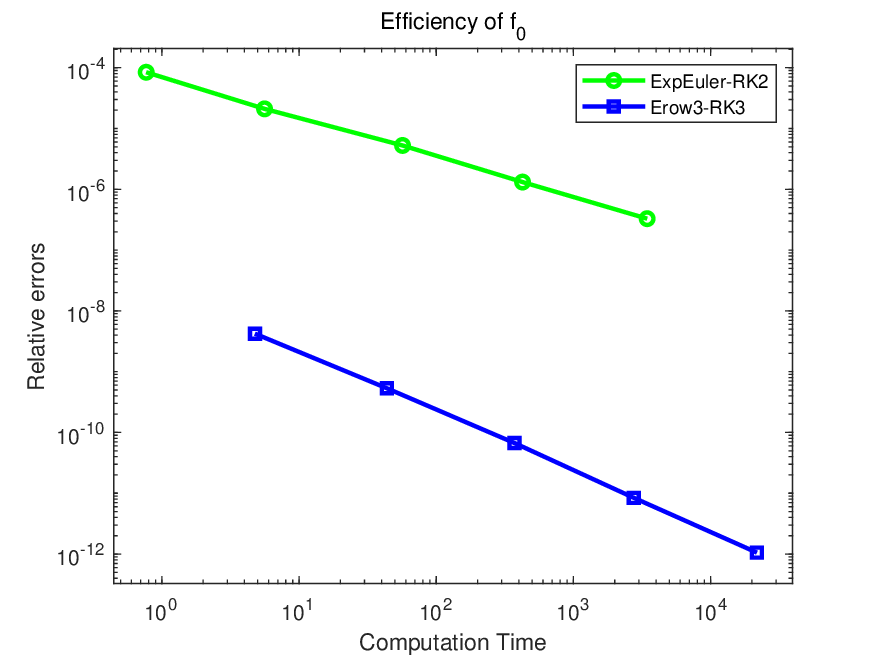} 
\caption{}
\label{test4}
\end{subfigure}
\caption{Order and Efficiency Test}
\label{f3}
\end{figure}

\subsection{Advantage of Erow3-RK3}
To compare our method with grid-based approach, we focus on the solvers of PDE (\ref{2.9})-(\ref{2.10}) in $1$-dimensional case. The coefficients and intervals of the problem are selected as 
\begin{align*}
    &r=0.01,\quad \gamma=0.5,\quad \rho_{0}=0.01,\quad \rho=0,\quad \alpha=0.005,\\
    & \mu=3, \quad \sigma=1, \quad t\in[0,1], \quad S\in[-10,10].
\end{align*}
 It is notable that the analytical approach is feasible in $1$-dimensional case, therefore we compute the benchmark solution using the numerical integration method in Section \ref{sec6.1}. Then we employ Erow3-RK3 to solve $\varphi(t,S)$ with $25$ time discretization steps. Additionally, we directly solve PDE (\ref{2.9})-(\ref{2.10}) using the integration-difference method, which employs the Finite Difference Method (FDM) with exact boundary conditions obtained through numerical integration. This approach is less computationally demanding than the benchmark solution and more accurate than the FDM with truncated boundary conditions. We discretize both time and space with 100 uniform steps for the integration-difference method and compare its numerical error with Erow3-RK3.

\begin{figure}[h]
    \centering
    \begin{subfigure}[b]{0.45\textwidth}
        \centering
        \includegraphics[width=\textwidth]{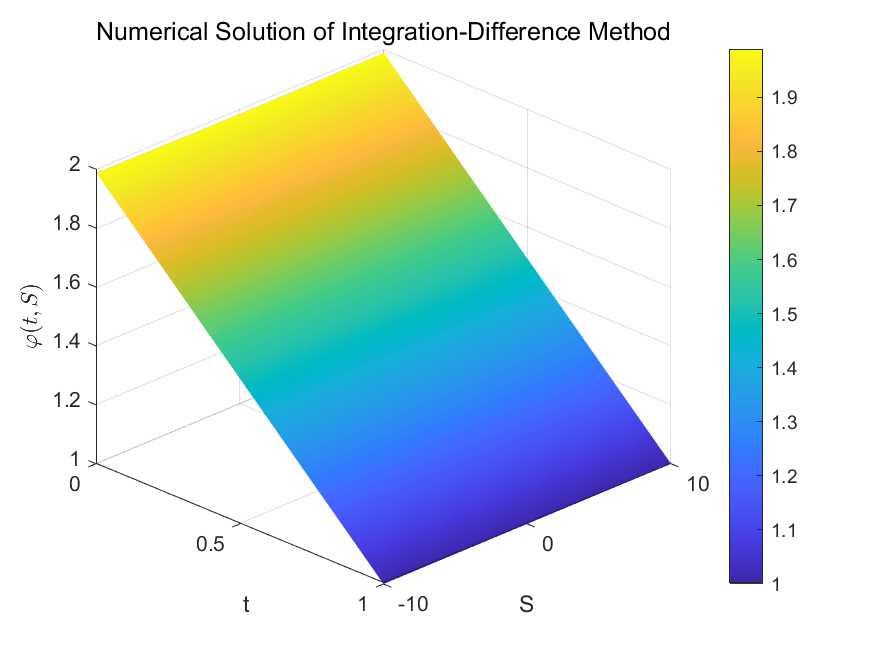}
        \caption{}
        \label{num_idm}
    \end{subfigure}
   \quad
    \begin{subfigure}[b]{0.45\textwidth}
        \centering
        \includegraphics[width=\textwidth]{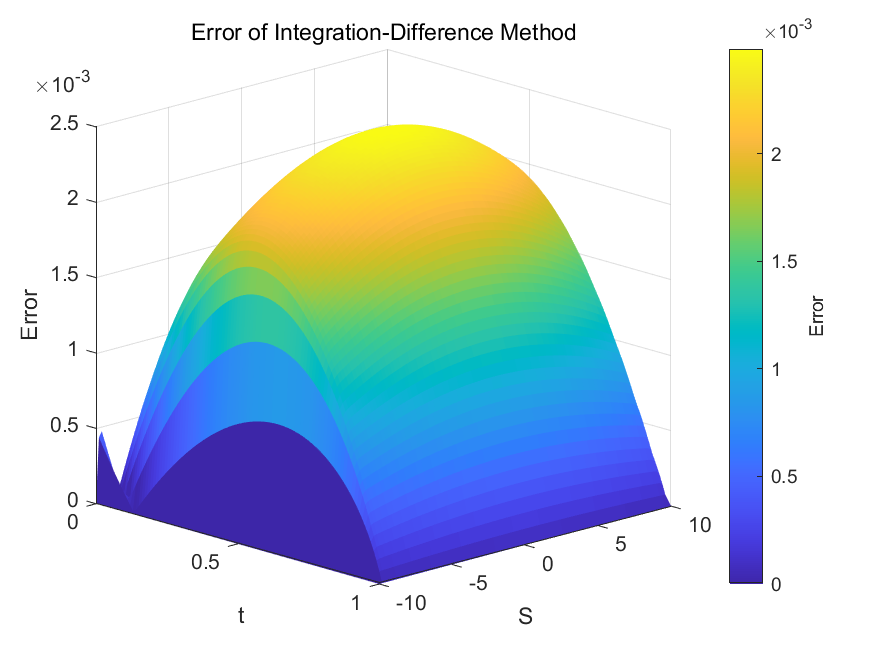}
        \caption{}
        \label{error_fdm}
    \end{subfigure}
    \begin{subfigure}[b]{0.45\textwidth}
        \centering
        \includegraphics[width=\textwidth]{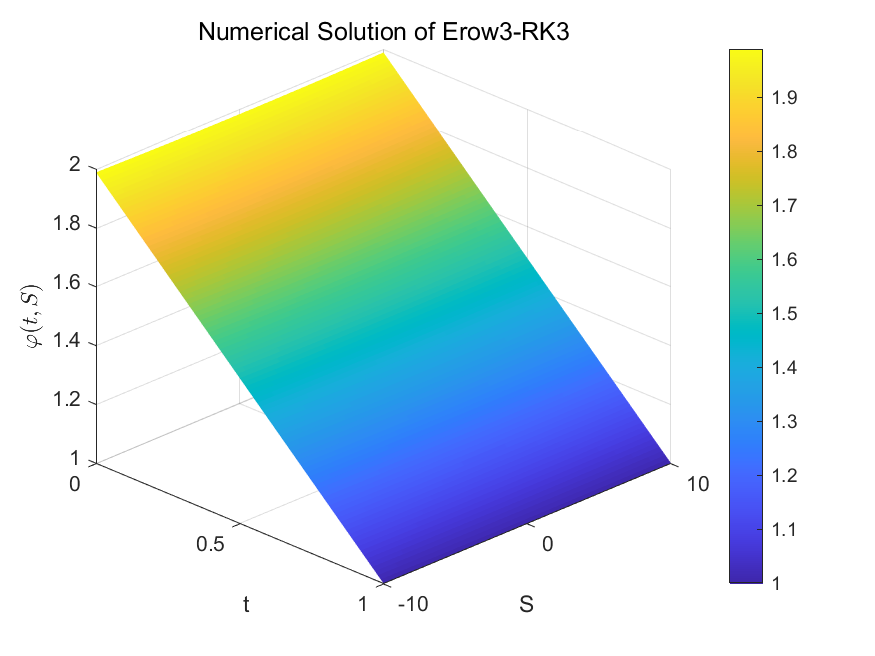}
        \caption{}
        \label{num_Erow3-RK3}
    \end{subfigure}
   \quad
    \begin{subfigure}[b]{0.45\textwidth}
        \centering
        \includegraphics[width=\textwidth]{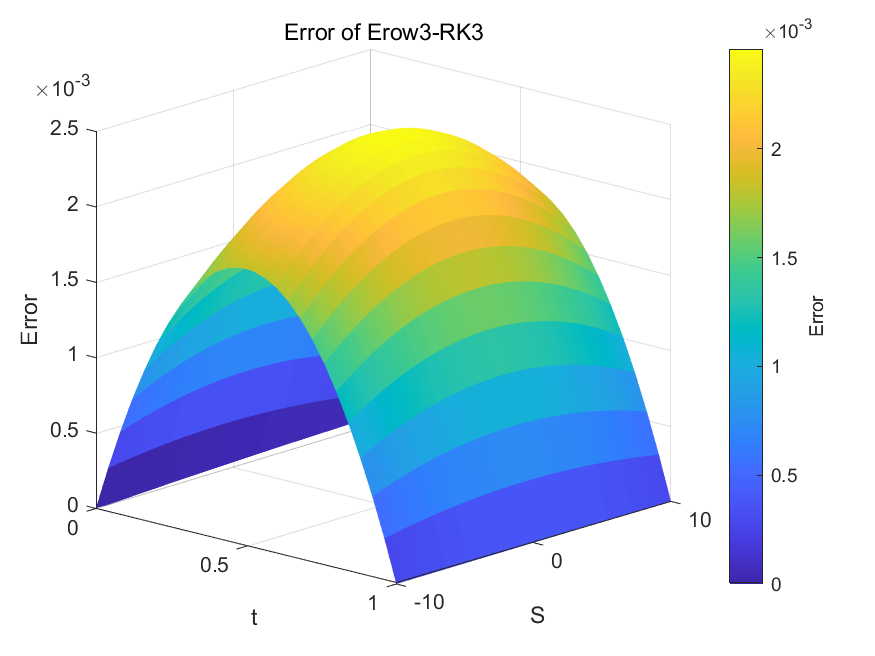}
        \caption{}
        \label{error_Erow3}
    \end{subfigure}
    \caption{Numerical Solutions and Errors}
    \label{f5}
\end{figure}

In Figure \ref{f5}, the panels (\ref{num_idm})-(\ref{error_fdm}) present the numerical solution and error of the integration-difference method, while (\ref{num_Erow3-RK3})-(\ref{error_Erow3}) show those of the Erow3-RK3 Method. We observe that these two methods demonstrate similar error distribution. However, the Errow3-RK3 method achieves a computation time of 438 seconds, resulting in a 41\% reduction compared to the FDM (738 seconds), thereby demonstrating its superior computational efficiency. This efficiency gain is likely due to our variable-separation strategy, which allows Errow3-RK3 to solve only the time-dependent ODE subsystem, whereas the FDM must also resolve the state domain.
 	
\subsection{Application of Erow3-RK3}	
 Schwartz \cite{https://doi.org/10.1111/j.1540-6261.1997.tb02721.x} estimates the parameters of exponential O-U model for oil, copper and gold price. In this subsection, we focus on the optimal control problem (\ref{2.100}) subject to the constraint (\ref{2.1}) in $2$-dimensional case and adopt the parameter estimates for oil obtained from different contracts. Additionally, we introduce a perturbation of 0.01 to the off-diagonal entries of the volatility matrix. The coefficients and intervals are 
\begin{align*}
  &r=0.3, \quad  \gamma=0.5, \quad \rho_{0} = 0.03, \quad \rho=[0.02,0.01]^{\top}, \\
  & \alpha=(0.301,0.428)^{\top}, \quad \mu=(3.093,2.991)^{\top}, 
   \sigma=
\begin{pmatrix}
	0.334 &   0.01  \\
	0.01     & 0.257
\end{pmatrix},\\
&\varrho=
\begin{pmatrix}
	0.002 &   0  \\
	0     & 0.002
\end{pmatrix},\quad t\in[0,0.25], \quad S\in [1,3]\times [1,3].
 \end{align*}
A procedural flowchart is provided in Figure \ref{fp} to illustrate the solution process for the original optimal control problem.

\begin{figure}[h]
    \centering    \includegraphics[width=0.7\textwidth]{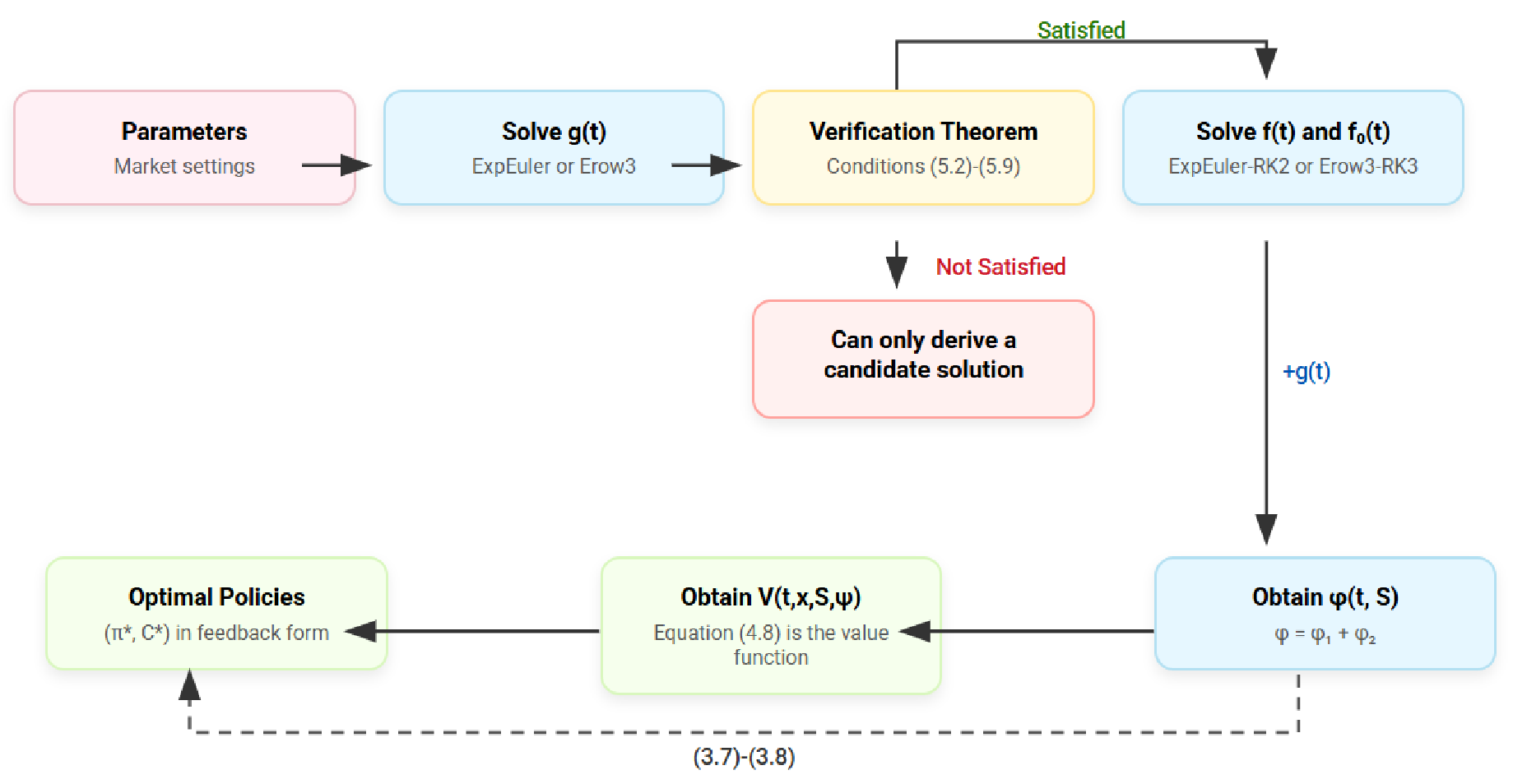}
    \caption{Procedural flowchart}
    \label{fp}
\end{figure}

 Following the procedure outlined in Figure \ref{fp}, we first verify whether conditions (\ref{4.3})-(\ref{4.10}) are satisfied. This is done by reformulating each inequality so that all terms are on the right-hand sides, and then checking for positivity. Using Erow3 to solve for $g(t)$ with 100 time discretization steps, we numerically evaluate the values of the right-hand sides and present the results in Figure \ref{f6} to examine whether they remain positive throughout.
 
\begin{figure}[h]
	\centering
	\begin{subfigure}[b]{0.45\textwidth} 
		\centering
		\includegraphics[width=\textwidth]{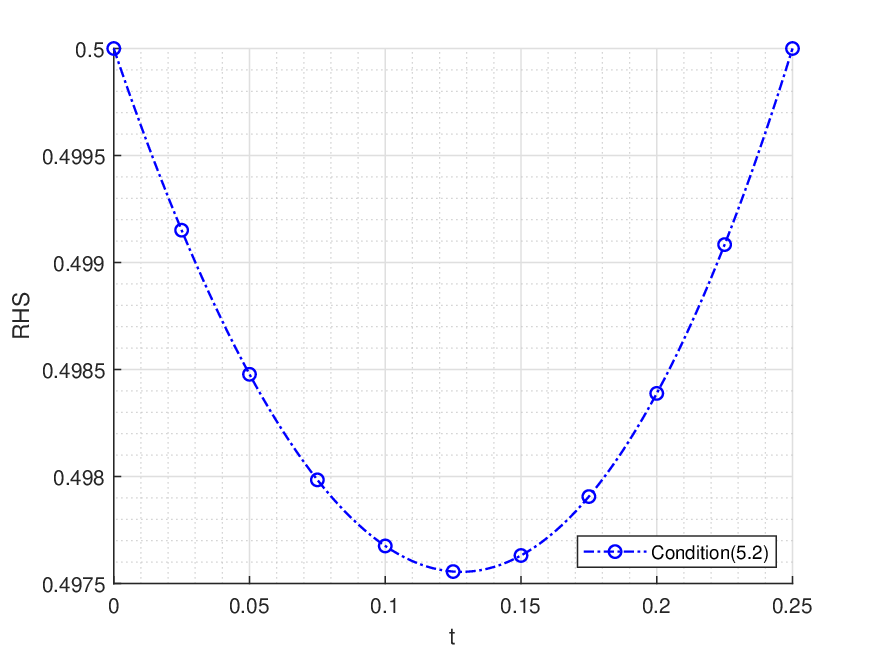} 
	\end{subfigure}
    \quad
	\begin{subfigure}[b]{0.45\textwidth} 
		\centering
		\includegraphics[width=\textwidth]{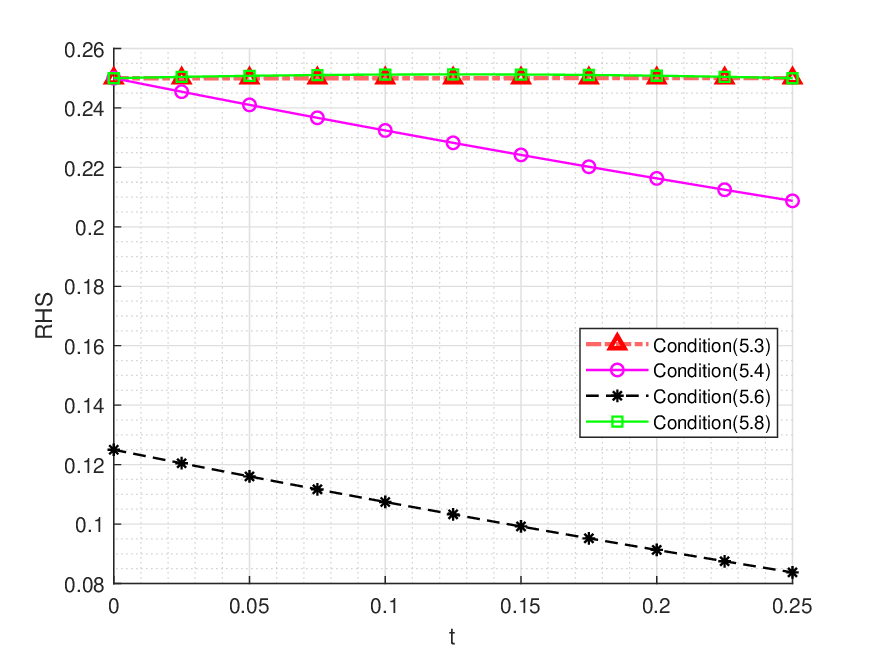} 
	\end{subfigure}
    	\begin{subfigure}[b]{0.45\textwidth} 
		\centering
		\includegraphics[width=\textwidth]{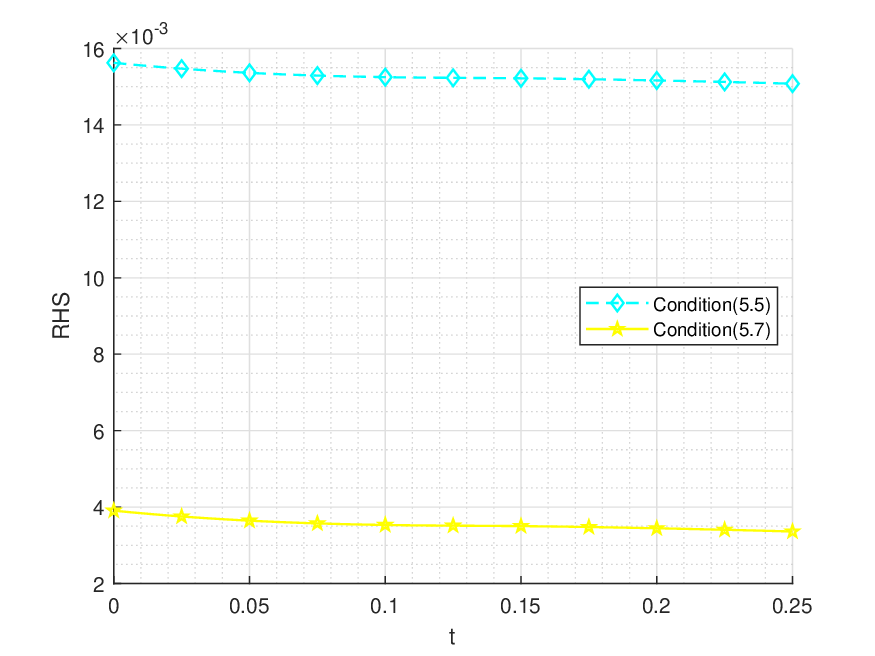} 
	\end{subfigure}
    \quad
    	\begin{subfigure}[b]{0.45\textwidth} 
		\centering
		\includegraphics[width=\textwidth]{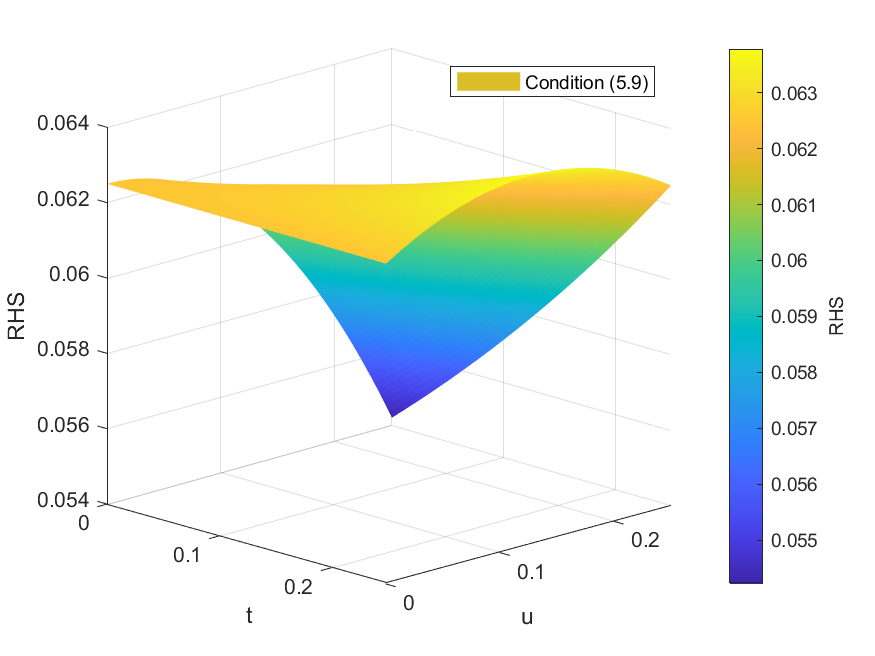} 
	\end{subfigure}
    \caption{Verification Conditions}
	\label{f6}
\end{figure} 

As shown in Figure \ref{f6}, the reformulated right-hand sides of conditions (\ref{4.3})-(\ref{4.10}) are all positive, which confirms that the verification conditions are fully satisfied. Then we employ Erow3-RK3 to solve $\varphi(t,S)$ with 50 time discretization steps. Subsequently, we derive the numerical optimal policy function from (\ref{3.221})-(\ref{3.222}), which is a feedback control. Starting from initial state values, once $X(t_k),S(t_k)$ are obtained, they are substituted into the policy formula to determine the portfolio investment and consumption values. These control inputs are then used to drive the constrained SDE system forward to compute $X(t_{k+1})$ and $S(t_{k+1})$ dynamically. We set the initial state as $S(0) = 2$. Given the computational intractability of determining $x$ according to (\ref{4.13}), we opt for a conservative choice of $x$ large enough to ensure positivity of the resulting wealth processes under all candidate policies. In our case, we set $x = 25$. We apply Euler-Maruyama Method to simulate the state process ${S(t)}$, the optimal wealth process $\{\hat{X}^{*}(t)\}$, the absolute value of the optimal portfolio process $\{|\hat{\pi}^{*}(t)|\}$, and the optimal consumption process $\{\hat{C}^{*}(t)\}$, with all values presented on a logarithmic scale. We generate 50 simulation paths and plot them in the Figure \ref{f7}.

\begin{figure}[htbp]
    \centering
    \begin{subfigure}[b]{0.45\textwidth}
        \centering
        \includegraphics[width=\textwidth]{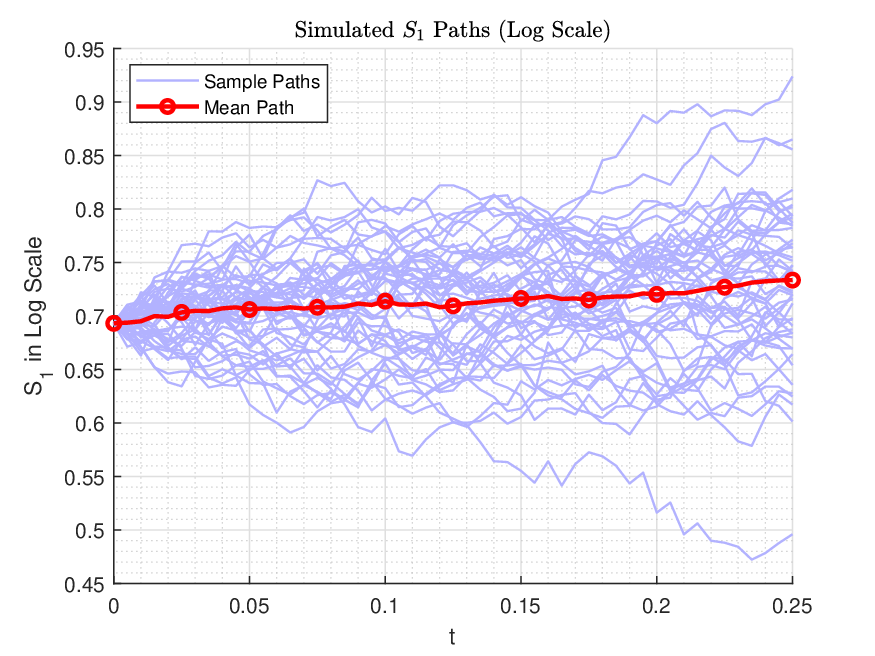}
        \caption{}
        \label{fig:S1}
    \end{subfigure}
    \quad
    \begin{subfigure}[b]{0.45\textwidth}
        \centering
        \includegraphics[width=\textwidth]{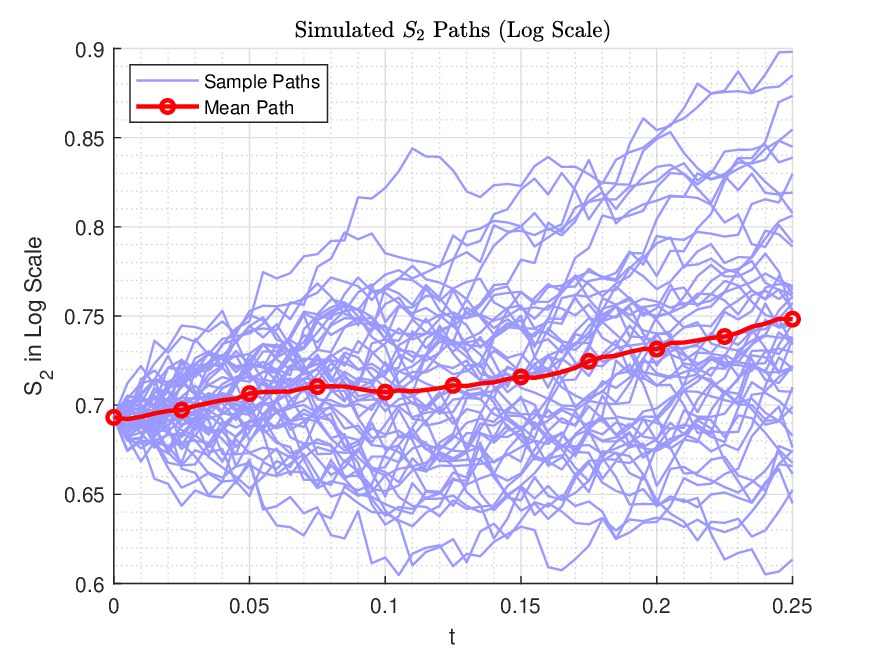}
        \caption{}
        \label{fig:S2}
    \end{subfigure}
    \caption{Simulation Results}
    \label{f7}
\end{figure} 

\begin{figure}[h]
    \ContinuedFloat  
    \centering
    \begin{subfigure}[b]{0.45\textwidth}
        \centering
        \includegraphics[width=\textwidth]{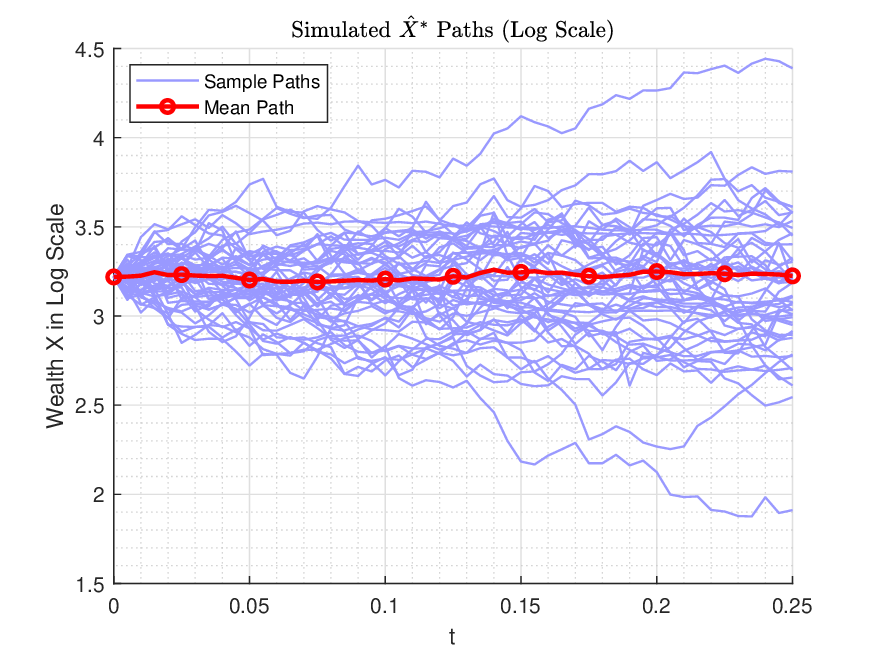}
        \caption{}
        \label{fig:X}
    \end{subfigure}
    \quad
    \begin{subfigure}[b]{0.45\textwidth}
        \centering
        \includegraphics[width=\textwidth]{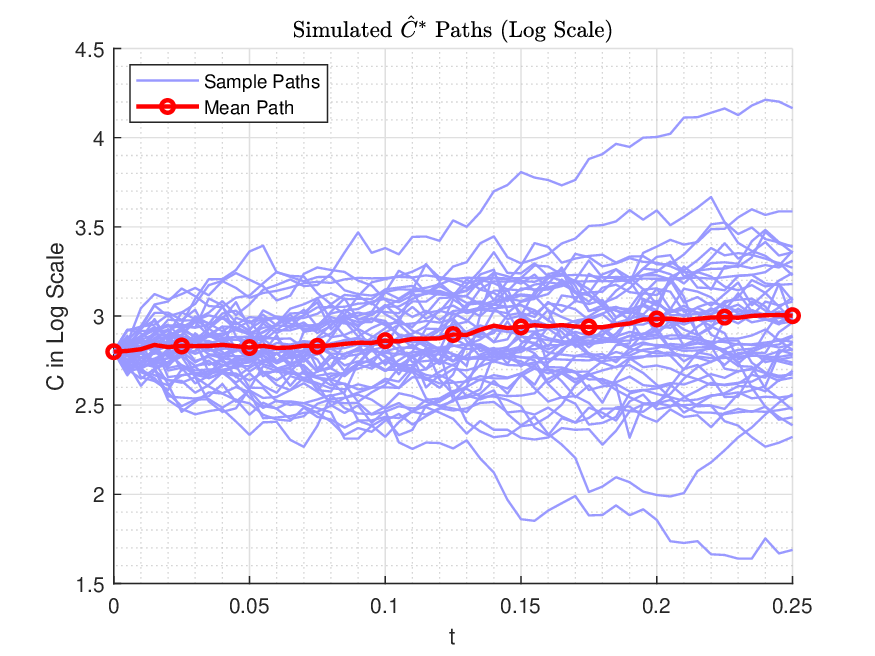}
        \caption{}
        \label{fig:c}
    \end{subfigure}

    \begin{subfigure}[b]{0.45\textwidth}
        \centering
        \includegraphics[width=\textwidth]{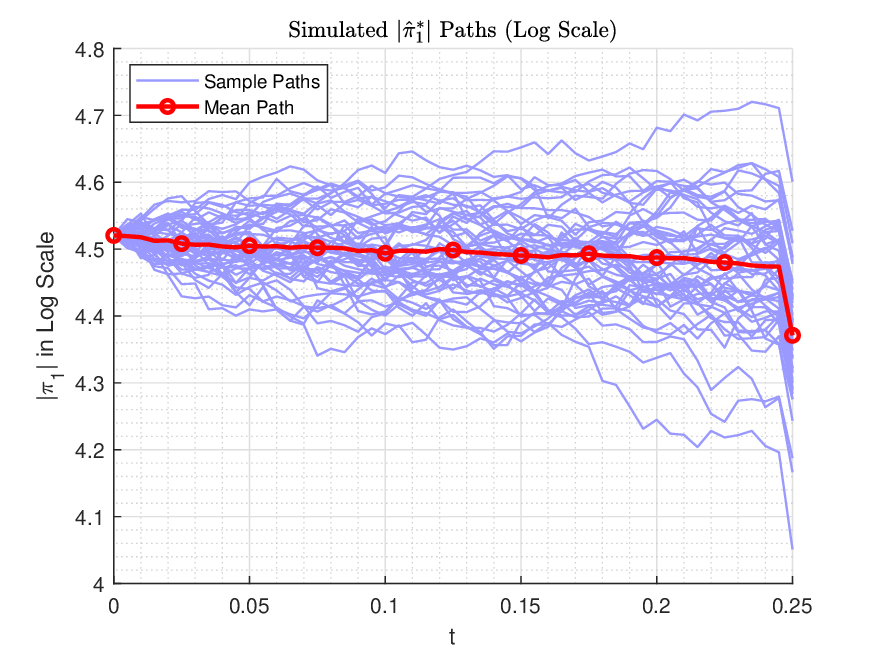}
        \caption{}
        \label{fig:pi1}
    \end{subfigure}
    \quad
    \begin{subfigure}[b]{0.45\textwidth}
        \centering
        \includegraphics[width=\textwidth]{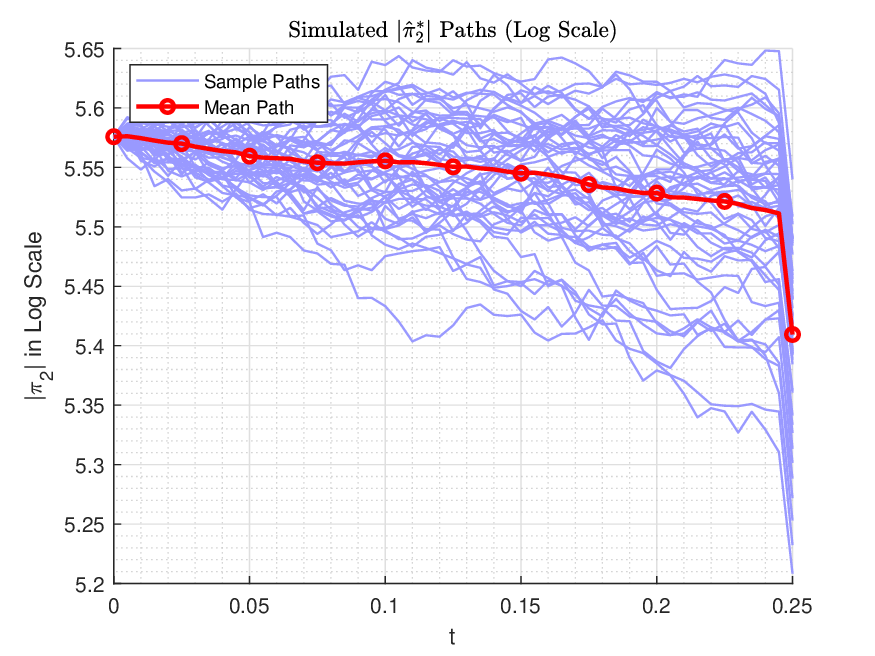}
        \caption{}
        \label{fig:pi2}
    \end{subfigure}
    \caption{Simulation Results}
\end{figure}

From Figure \ref{f7}, panel (\ref{fig:X}) shows that $\hat{X}^{*}$ exhibits considerable fluctuation magnitude while the mean path remains relatively stable. Panels (\ref{fig:pi1}) and (\ref{fig:pi2}) illustrate that both $|\hat{\pi}^{*}_1|$ and $|\hat{\pi}^{*}_2|$ exhibit initial stability, followed by a significant downward trajectory in the latter period. Furthermore, both components of $|\hat{\pi}^{*}|$ exhibit episodes of high leverage at certain times, which may be considered unrealistic in real-world markets. However, since our study is primarily theoretical, such outcomes are acceptable in this context. In practical applications, however, it is crucial to impose constraints on the leverage ratio. 

To validate the optimality of our numerical result, we compute the mean utility across simulated paths to approximate value of obejective functional (\ref{2.100}) and compare it with that obtained from alternative admissible policies. However, computing the utility of every admissible policy is infeasible. We consider several commonly studied investment policies, including the riskless policy \cite{e5a1bb8f-41b7-35c6-95cd-8b366d3e99bc}, the consumptionless policy  \cite{MERTON1971373}, the bondless policy (no risk-free asset) \cite{doi:10.1287/mnsc.2021.3989}, the random policy \cite{10.1007/3-540-36626-1_11}, leverage policies with different leverage ratio \cite{vanRensburg2016}, and our numerical policy.

\begin{table}[!ht]
    \centering
    \resizebox{\textwidth}{!}{
    \begin{tabular}{|l| c| c| c|c|}
        \hline
        Policies & $\pi_1(t,X_t)$ & $\pi_2(t,X_t)$ & $C(t,X_t)$ & Mean Utility \\
        \hline
        Riskless & 0 & 0 & $X_t/2$ & 5.9252 \\
        No Consumption & $X_t/3$ & $X_t/3$ & 0 & 4.6904 \\
        No Consumption (alt.) & $X_t/4$ & $X_t/2$ & 0 & 4.6952 \\
        No Bonds & $X_t/2$ & $X_t/2$ & $X_t/4$ & 5.6821 \\
        No Bonds (alt.) & $2X_t/3$ & $X_t/3$ & $X_t/3$ & 5.7994 \\
        Random & $ \xi_1 \cdot X_t/2$ & $\xi_2 \cdot X_t/2  $ & $\xi_3 \cdot X_t/4$ & 5.3393 \\
        \hline
        Balanced Leverage & $X_t$ & $-X_t/2$ & $X_t/2$ & 5.9368 \\
        Moderate Leverage & $3X_t$ & $-2.5X_t$ & $X_t/3$ & 5.5259 \\
        High Leverage & $-5X_t$ & $5X_t$ & $2X_t/3$ & 5.2690 \\
        Extreme Leverage & $-8X_t$ & $10X_t$ & $X_t/4$ & 3.6834 \\
        \hline
        Our Numerical Policy & $\hat{\pi}^{*}_1(t,X_t)$ & $\hat{\pi}^{*}_2(t,X_t)$ & $\hat{C}^{*}(t,X_t)$ & 6.3022 \\
        \hline
    \end{tabular}
    }
    \caption{Mean Utility Comparison }
    \label{t1}
\end{table}

Table \ref{t1} presents a comparison of mean utilities. The first column lists the investment policies under consideration, the second to fourth columns detail the corresponding $(\pi, C)$ policies, and the final column reports the associated mean utility for each policy. We note that under the random policy, $\xi_1, \xi_2, \xi_3$ are independent and identically distributed uniform random variables on the interval [0,1]. Table \ref{t1} demonstrates that $(\hat{\pi}^{*},\hat{C}^{*})$ outperforms other admissible policies in terms of mean utility.

\section{Conclusions}
    \label{sec7}
	In this paper, we studied the problem of multi-asset portfolio-consumption optimization with exponential O-U stock dynamics and stochastic discounting CRRA utility. We developed hybrid numerical methods to address the computational challenges of high-dimensional stochastic optimal control problems. Leveraging variable separation techniques, we reformulate the HJB equation as an ODE system and develop the ExpEuler-RK2 and Erow3-RK3 schemes for efficient numerical approximation. Most importantly, we derived a rigorous verification theorem with numerically verifiable conditions that guarantee the optimality of our solutions, which distinguishes our approach from existing numerical methods that typically lack theoretical guarantees. Our methods demonstrate significant computational advantages, with Erow3-RK3 achieving 41\% faster computation than conventional grid-based approach, while maintaining third-order convergence. Numerical experiments confirm that the optimal policies computed by our method outperform alternative policies by achieving higher mean utility, validating both the accuracy and practical effectiveness of our methodology.

\section*{Acknowledgements}
This work was supported by the National Key Research and Development Program of China under the grant No. 2020YFA0713602 and the Graduate Innovation Fund of Jilin University under the grant Nos. 2024KC036, 2025CX097 and 2025CX090.

            
			\printbibliography[ title={References}]
			
			

		\end{document}